\pdfoutput=1
\documentclass[11pt]{amsart}
\usepackage{amsmath,amssymb,amsthm}
\usepackage[margin=1.2in]{geometry}
\usepackage{booktabs}
\usepackage{adjustbox}
\usepackage{xcolor}
\usepackage[colorlinks=true,linkcolor=blue,citecolor=blue,urlcolor=blue]{hyperref}
\hypersetup{pdfauthor={Bernd Johannes Wuebben},
  pdftitle={Deformations of toric pairs and the smoothing of Batyrev Calabi-Yau threefolds}}

\newtheorem{theorem}{Theorem}[section]
\newtheorem{proposition}[theorem]{Proposition}
\newtheorem{lemma}[theorem]{Lemma}

\theoremstyle{definition}

\newtheorem{question}[theorem]{Question}
\theoremstyle{remark}
\newtheorem{remark}[theorem]{Remark}

\newcommand{\ZZ}{\mathbb{Z}}
\newcommand{\CC}{\mathbb{C}}
\newcommand{\PP}{\mathbb{P}}
\newcommand{\FF}{\mathbb{F}}
\newcommand{\RR}{\mathbb{R}}
\newcommand{\conv}{\operatorname{conv}}
\newcommand{\Cone}{\operatorname{Cone}}
\newcommand{\Spec}{\operatorname{Spec}}

\newcommand{\Sing}{\operatorname{Sing}}

\title[Deformations of toric pairs]{Deformations of toric pairs\\ and the
smoothing of Batyrev Calabi--Yau threefolds}
\author{Bernd Johannes Wuebben}
\subjclass[2020]{14J32 (primary); 14B07, 14M25, 14D15, 52B20 (secondary)}
\keywords{Calabi--Yau threefolds, smoothings, deformations of toric
pairs, reflexive polytopes, anticanonical hypersurfaces, Batyrev
construction}
\date{August 15, 2026}

\begin{document}

\begin{abstract}
Let $\Delta$ be a reflexive four-polytope with primitive edges. We
construct deformations of the toric fourfold defined by its face fan
that smooth the singularities of a generic anticanonical Calabi--Yau
hypersurface over a chosen two-dimensional face. The construction applies
when the face cone is smoothable and the dual edge has an interior
lattice point; its equations are trinomials in Cox coordinates. It gives
a smoothing of the hypersurface when this is the only face whose cone
is singular. If every singular face satisfies these conditions,
simultaneous smoothing follows in the nodal case and, in general, under
irreducibility of the semiuniversal deformation space. A seven-vertex
example with one ordinary double point and no smoothing shows that the
dual-edge condition is sharp. A scan of the Kreuzer--Skarke classification
shows that the facewise criterion
accounts for 3,774 of the 30,241 smoothings in the Batyrev--Kreuzer census;
the other 26,467 require relations among exceptional curve classes from
different faces.
\end{abstract}

\maketitle

\section{Introduction}\label{sec:intro}

Let $\Delta \subset \ZZ^4$ be a reflexive polytope all of whose edges have
lattice length one, and let $X \subset \PP_\Delta$ be a generic
anticanonical hypersurface in the Gorenstein Fano toric fourfold
associated to the face fan of $\Delta$.  By Batyrev \cite{Batyrev94} $X$
is a Calabi--Yau threefold with Gorenstein canonical singularities, and by
\cite[Prop.~3.1]{paper1} its singular locus
is exactly: for each two-dimensional face $F \preceq \Delta$ whose cone
$C(F)$ is singular, $\ell(F^\circ)$ points at which the germ of $X$ is
analytically isomorphic to $C(F)$.

The deformation theory of the germs is completely understood
combinatorially: by Altmann \cite{Altmann97} and
Corti--Filip--Petracci \cite{CFP}, $C(F)$ is \emph{smoothable} if and only
if $F$ admits a Minkowski decomposition into unit segments and standard
triangles (type (S) in the Altmann--Corti--Filip--Petracci trichotomy,
as recalled in \cite[Thm.~2.2]{paper1}).  The companion paper
proved that a single face of the other two types (reduced-rigid or
deformable-but-non-smoothable) forbids every global smoothing of $X$.
This paper treats the remaining case: \emph{every} singular
two-dimensional face is of type (S), so every singular point of $X$
admits a local smoothing, and the question is whether the local
smoothings are realized by a global deformation.  This is the classical
local-to-global problem: for ordinary double points smoothability is
governed by
Friedman's relation among the exceptional curve classes
\cite{Friedman86,RollenskeThomas}, and for the toric hypersurfaces at
hand it was studied, in the all-conifold case, by Batyrev--Kreuzer
\cite{BK10}, who found that of the $198{,}849$ reflexive $4$-polytopes
whose two-dimensional faces are standard triangles and unit
parallelograms (at least one parallelogram),
only $30{,}241$ could be shown smoothable by Namikawa's criterion.
One dimension down, the analogous constructive problem is to smooth a
Gorenstein toric Fano \emph{threefold} from Minkowski data on the facets
of a reflexive $3$-polytope; it was recently solved by
Corti--Hacking--Petracci \cite{CHP24}.  The obstruction side of the same
programme is due to Petracci \cite{Petracci20}, who produces
non-smoothable Gorenstein toric Fano threefolds by a local-to-global
criterion on a reflexive $3$-polytope.  Theorem~\ref{thm:B} below plays
the corresponding role here: it gives a uniform one-node obstruction and
an explicit reflexive $4$-polytope realizing it.  In both of those works the Minkowski data live on
the \emph{facets} of the $3$-polytope and the object smoothed or
obstructed is the Gorenstein toric Fano threefold itself; here the data
live on the
codimension-two faces of a reflexive $4$-polytope and the object smoothed
is the anticanonical hypersurface in the toric fourfold.

Our first result is constructive and identifies a clean sufficient
condition.  Write $\ell(F^\circ)$ for the lattice length of the edge of
$\Delta^\circ$ dual to a two-dimensional face $F$.

\begin{theorem}[= Theorem~\ref{thm:A2}]\label{thm:A}
Let $\Delta$ be reflexive with unit edges and let $F$ be a
two-dimensional face of type \textup{(S)}, with a Minkowski decomposition
$F = Q_0 + \dots + Q_r$ into unit segments and standard triangles, and
suppose $\ell(F^\circ) \ge 2$.  Then there is an explicit $r$-parameter
deformation of the pair $(\PP_\Delta, \partial\PP_\Delta)$, given by
trinomial equations in Cox coordinates, whose induced deformation of the
generic anticanonical $X$ has generic fibre smooth in a neighbourhood of
the $\ell(F^\circ)$ points of type $C(F)$.  In particular, if $F$ is the
\emph{unique} non-smooth two-dimensional face of $\Delta$, then $X$ is
smoothable.
\end{theorem}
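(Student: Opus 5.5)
The plan is to build the deformation from Altmann's local smoothing and then globalize it in Cox coordinates, using the interior lattice point of the dual edge to supply the extra parameters the local smoothing needs.

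\textbf{Setup near $F$.} The face $F$ spans a cone $C(F)$ of the face fan. Its rays are the vertices $v_1,\dots,v_k$ of $F$, with Cox variables $x_{v_1},\dots,x_{v_k}$. The dual edge $F^\circ\subset\Delta^\circ$ has endpoints $m_0,m_1$. These are the two facet normals $u\in M$ with $\langle u,v\rangle=-1$ on $F$. Since $\ell(F^\circ)\ge 2$, there is an interior lattice point $m_{1/2}$ of $F^\circ$, which still satisfies $\langle m_{1/2},v\rangle=-1$ for all $v\in F$.

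\textbf{Local deformation.} Following Altmann \cite{Altmann97}, the Minkowski decomposition $F=Q_0+\dots+Q_r$ gives a versal-type $r$-parameter deformation of the germ $C(F)$. It is the total space of the cone over the Cayley polytope $Q_0*\dots*Q_r$. In equations, the binomial relations $\prod x_v^{a_v}=\prod x_v^{b_v}$ among the Cox monomials of $C(F)$ become trinomials. Concretely, for each $i=1,\dots,r$ we add $t_i$ times a monomial recording the summand $Q_i$.

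\textbf{Globalization.} The task is to realize these trinomials as relations deforming the pair $(\PP_\Delta,\partial\PP_\Delta)$ globally. I would work on the Cox ring $S=\CC[x_\rho]$. The deformation should be given by perturbing the torus-invariant boundary: replace the toric boundary equation by trinomials of the form
\[
x^{D_i}-x^{D_i'}+t_i\,x^{E_i}.
\]
Here all three monomials must be homogeneous of the same class in $\operatorname{Cl}(\PP_\Delta)$. Equivalently, they are sections of the same reflexive sheaf, whose degree is read off from the lattice points $m$ with $\langle m,\cdot\rangle$ prescribed on $F$.

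The role of $\ell(F^\circ)\ge 2$ is exactly to give this degree room. Characters pairing to $-1$ on all of $F$ interpolate between $m_0$ and $m_1$, and $m_{1/2}$ supplies a monomial $x^{E_i}$ that does not vanish identically along the orbit $O(F)$ while having the correct degree. Away from $C(F)$, the added term is divisible by variables that make the affine charts of other cones unchanged up to isomorphism. Verifying this global homogeneity is the main obstacle, and I would handle it chart by chart. In the chart $U_{C(F)}\cong C(F)\times\CC^*\times\ldots$ the relation should restrict to Altmann's trinomial times units. In every other maximal chart it should be a trivial (locally trivial) deformation.

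\textbf{Restriction to $X$.} A generic anticanonical section $f$ lifts to the family, because $H^1(\PP_\Delta,-K)$ vanishes and the deformation is of the pair. The induced family $X_t$ is therefore well defined. Near each of the $\ell(F^\circ)$ points of $X\cap O(F)$, the germ of $X$ is $C(F)$ by \cite[Prop.~3.1]{paper1}. By construction, the family restricts there to Altmann's deformation, which has smooth generic fibre because the Cayley cone's general fibre is smooth exactly for type (S) summands (unit segments and standard triangles). The same parameters $t$ work at all these points simultaneously, because the transversal slice varies along $O(F)$ while the trinomial is torus-equivariant up to the units.

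\textbf{Uniqueness clause.} If $F$ is the only non-smooth two-dimensional face, then $\Sing X$ consists only of these points, by \cite[Prop.~3.1]{paper1}. Hence the generic fibre is smooth everywhere, and $X$ is smoothable.
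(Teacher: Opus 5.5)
There is a genuine gap, and it sits in the step you yourself flag as the main obstacle: you never construct the global family, and the mechanism you sketch cannot produce one. Homogeneous trinomials in the Cox ring $\CC[x_\rho]$ of $\PP_\Delta$, all of one class in $\operatorname{Cl}(\PP_\Delta)$, are sections of a sheaf on the fixed variety $\PP_\Delta$. So they cut out subschemes of $\PP_\Delta$, not a deformation of it. They cannot alter the ambient germ $C(F)$ (cf.\ Remark~\ref{rem:nonpoly}), and you never address flatness. What you need is a toric total space with $\PP_\Delta$ as central fibre, together with a reason the trinomials cut it out flatly. The paper gets both from Petracci's theorem on homogeneous deformations of toric pairs \cite[Thm.~6.1]{Petracci18}. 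It applies that theorem to the $\partial$-deformation datum $(F\times\{1\},\,Q_0\times\{1\},\,Q_1\times\{0\},\dots,Q_r\times\{0\},\,w=(v^\ast,0))$ on the cone $\tau=\Cone(\Delta\times\{1\})$ over the pair. The trinomials then live in the Cox coordinates of the toric variety of the extended cone $\tilde\tau\subset\ZZ^{5+r}$, not in those of $\PP_\Delta$. Your remark that the added term is divisible by the variables of vertices off $F$ is the right intuition. Made precise, it is conditions (v) and (vi) of the datum: $\langle v^\ast,V\rangle\ge-1$ for every vertex $V$ of $\Delta$, with equality exactly for $V\in F$. Hence the slice $\tau\cap\{w=-1\}$ is $F$ itself, whereas at an endpoint of $F^\circ$ it would be a whole facet needing its own Minkowski decomposition. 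That is where $\ell(F^\circ)\ge2$ enters.

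The passage from the ambient family to $X$ is also asserted ``by construction'', but it needs an argument, and it is where genericity of $s_0$ is used. First, you must identify the family on $U_{\sigma_F}$ with Altmann's Cayley family compatibly with the $\CC^\ast$-direction. The paper makes it a product (Altmann family)$\,\times\,\CC^\ast$ by choosing the splitting $N=N_F\oplus\ZZ\zeta$ with $\langle v^\ast,\zeta\rangle=0$, so that every exponent in the trinomials is constant along $\CC^\ast$. Second, the deformed sections must stay transverse to that direction. The section $s_0$ has $\ell(F^\circ)$ simple zeros on $V(\sigma_F)\cong\PP^1$, so $\partial s_t/\partial z\ne0$ near $p$ for small $t$. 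Weierstrass division then exhibits $\mathcal Y_t=\{s_t=0\}$ near $p$ as a graph over an open subset of the Altmann fibre $\mathcal C_t$, hence smooth when $\mathcal C_t$ is. Without this, smoothing the ambient germ says nothing about the hypersurface. Finally, your claim that every other maximal chart is deformed trivially is unproved. Taken literally it fails for the two facets through $F$, whose charts contain $U_{\sigma_F}$. It is also unnecessary: in the unique-face case, openness of smoothness on the total space of the proper flat family $\mathcal Y\to T$ keeps nearby fibres smooth away from the former $F$-points.
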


The idea is simple to state.  The deformation is Petracci's homogeneous
deformation of the polarized toric pair, applied not to a facet but to
the codimension-two face $F$: the deformation datum lives on the cone
over the pair, with weight an \emph{interior} lattice point $v^\ast$ of
the dual edge $F^\circ$.  At the two endpoints of $F^\circ$ the relevant
slice of the cone grows to a full facet of $\Delta$, and the datum would
require a Minkowski decomposition of that three-dimensional facet; the
interior points of $F^\circ$ are exactly what makes a face-local datum
possible.  This is the source of the threshold $\ell(F^\circ)\ge2$, and
\S\ref{sec:node} shows the threshold is sharp.

Theorem~\ref{thm:A} belongs to a known genre.  A Calabi--Yau
hypersurface in a toric variety has \emph{polynomial} deformations,
obtained by moving $X$ inside a fixed $\PP_\Delta$, and these never
change the singularity structure here (Remark~\ref{rem:nonpoly}); the
\emph{non-polynomial} deformations, induced instead from deformations of
the ambient toric variety, were constructed by Mavlyutov \cite{Mav04}.
The construction we use is Petracci's \cite{Petracci18}, which extends
this Altmann--Mavlyutov theory of homogeneous deformations
\cite{Altmann95,Mav04} from affine toric varieties to toric pairs, and
Theorem~\ref{thm:A} is of exactly this non-polynomial kind.  What is new
is not the passage
from ambient to hypersurface but the \emph{source} of the ambient
deformation: a datum supported on a single codimension-two face, whose
existence is governed by the lattice length of the dual edge and whose
effect on the germ is read off from the Minkowski decomposition of that
face.  This is what makes the criterion face-local, and hence testable
across the Kreuzer--Skarke list.

For several type-(S) faces the construction smooths the points over each
face individually; the face-wise smoothings assemble unconditionally in
the nodal case, and in general under a hypothesis on the deformation
space.
The construction does not supply compatible toric data on the deformed
fibres with which to repeat the argument; see
Remark~\ref{rem:noiterate}.

\begin{theorem}[= Theorem~\ref{thm:A3}]\label{thm:Amulti}
Let $\Delta$ be reflexive with unit edges and suppose that every
non-smooth two-dimensional face $F_1,\dots,F_s$ of $\Delta$ is of type
\textup{(S)} (a Minkowski sum of unit segments and standard
triangles), with $\ell(F_i^\circ) \ge 2$.  Then:
\begin{enumerate}
\item for every singular point $p$ of $X$ there is an explicit
deformation of $X$ whose generic fibre is smooth in a neighbourhood of
$p$; consequently the locus $D_p \subset (B,0)$ of points of the
semiuniversal deformation of $X$ whose fibres remain singular near $p$
is a proper closed analytic subgerm;
\item if the base germ $(B,0)$ of the semiuniversal deformation of $X$
is irreducible (for instance smooth), then $X$ is smoothable;
\item if every $F_i$ is a unit parallelogram, then $X$ is smoothable
unconditionally.
\end{enumerate}
\end{theorem}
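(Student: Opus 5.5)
Part (1) follows from Theorem~\ref{thm:A} applied face by face. By \cite[Prop.~3.1]{paper1}, a singular point $p$ of $X$ lies over a unique singular two-dimensional face $F_i$, and the germ $(X,p)$ is isomorphic to $C(F_i)$. Since $F_i$ is of type (S) and $\ell(F_i^\circ)\ge 2$, Theorem~\ref{thm:A} provides an explicit deformation of the pair $(\PP_\Delta,\partial\PP_\Delta)$. We restrict it to $X$ and obtain a one-parameter deformation $\mathcal X\to(\Delta_t,0)$ whose generic fibre is smooth near all $\ell(F_i^\circ)$ points over $F_i$, and in particular near $p$.

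To pass to the semiuniversal deformation $(B,0)$, we use versality to get a classifying map $\phi\colon(\Delta_t,0)\to(B,0)$. The locus $D_p$ is the preimage, under the localization morphism $(B,0)\to\mathrm{Def}(X,p)$, of the discriminant of the local semiuniversal deformation of the isolated singularity $(X,p)$. That discriminant is a closed analytic subgerm, so $D_p$ is closed. Because $\phi(t)\notin D_p$ for small $t\neq 0$, $D_p$ is proper.

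For (2), assume $(B,0)$ is irreducible. Then each $D_p$ is a proper closed analytic subset of an irreducible germ, so it is nowhere dense. The singular set of $X$ is finite, so $D=\bigcup_p D_p$ is a finite union of nowhere dense analytic subgerms, and $B\setminus D$ is nonempty (indeed dense). A fibre over a point of $B\setminus D$ is smooth near every former singular point. By openness of smoothness in flat proper families, it is also smooth away from small neighbourhoods of those points, since $X$ is smooth there. Hence it is a smoothing.

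The one step needing care is that no new singularities appear elsewhere. We handle this with a properness argument, choosing neighbourhoods of the $p$'s and shrinking $B$. This is routine.

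Part (3) is the step I expect to be the main obstacle. When every $F_i$ is a unit parallelogram, every singularity is an ordinary double point. Here I would not try to prove $B$ irreducible. Instead I would argue through Friedman's criterion \cite{Friedman86,RollenskeThomas}. Let $\widehat X\to X$ be a small resolution, projective or at least Moishezon, obtained from a toric small resolution of $\PP_\Delta$ that subdivides each parallelogram. A node $p$ can be smoothed by a global deformation that also smooths all the others provided there is a relation $\sum\lambda_q[C_q]=0$ in $H_2(\widehat X)$ with $\lambda_p\neq0$, where $C_q$ is the exceptional curve over $q$. By the Friedman/Rollenske--Thomas theorem, if every node has such a relation with nonzero coefficient, then $X$ is smoothable.

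So the key step is to extract from the one-node local smoothing of part (1) a relation with $\lambda_p\neq0$. I would use the converse direction of Friedman's theory, namely that the first-order smoothing of the nodes over $F_i$ produces a relation among the classes $[C_q]$ for $q$ over $F_i$. Alternatively, I would construct the relation directly: the $\ell(F_i^\circ)$ exceptional curves over $F_i$ are fibres of the exceptional ruled surface over the torus-invariant curve dual to $F_i^\circ$. The interior lattice point $v^\ast$ from Theorem~\ref{thm:A} gives a divisor whose intersection with that surface shows that the classes satisfy $\sum_{q\text{ over }F_i}[C_q]=0$, or some similar relation with all coefficients nonzero.

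Once every node carries such a relation, Friedman's criterion gives a smoothing, unconditionally on the structure of $B$. Verifying that the relation really has nonzero coefficients, which is where $\ell(F_i^\circ)\ge2$ must enter, is the crux. I would check it first on the seven-vertex example of \S\ref{sec:node}, where the relation should fail when $\ell(F^\circ)=1$.
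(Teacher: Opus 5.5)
Parts (1) and (2) are fine and follow the paper's route. Your description of $D_p$ as the pullback of the local discriminant along a map $(B,0)\to\mathrm{Def}(X,p)$ is an acceptable substitute for the paper's relative singular locus $\Sigma=\bigsqcup_q\Sigma_q$ finite over $B$. In (2) you stop at $B\setminus D\neq\emptyset$; to get a smoothing over a disc you still need an arc through $0$ meeting $D$ only at $0$, which the paper obtains by curve selection. That is a minor omission.

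The genuine gap is in (3). You never produce the relation, and the one you guess, $\sum_{q\text{ over }F_i}[C_q]=0$, is false. The MPCP resolution $\widehat X$ is a \emph{projective} small resolution, so an ample $A$ has $A\cdot C_q>0$ for every $q$, and that sum has positive degree. This positivity is exactly what makes the one-node example of \S\ref{sec:node} non-smoothable.

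The missing idea is the fact you half-state. The $\ell_i=\ell(F_i^\circ)$ exceptional curves over $F_i$ are fibres of one ruled surface over $V(\sigma_{F_i})\cong\PP^1$. By Batyrev--Kreuzer they all represent the \emph{same} class $[\gamma_{F_i}]\in H_2(\widehat X;\mathbb Q)$, namely the diagonal relation of $F_i$. So for each face you choose integers $a_{i,1},\dots,a_{i,\ell_i}$, all nonzero, with $\sum_j a_{i,j}=0$. This is possible precisely because $\ell_i\ge2$, and that is where the hypothesis enters; no input from $v^\ast$ is needed. Summing over the faces gives a relation among all exceptional classes with every coefficient nonzero. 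Friedman's criterion, together with Kawamata--Tian unobstructedness, then smooths $X$.

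Your other route, via the converse of Friedman, can also be closed, but not as written. ``Generic fibre smooth near $p$'' does not by itself give a nonzero first-order component at $p$. Here it does hold: for square $=$ segment $+$ segment, Altmann's family is $xy-zw=t$, linear in $t$. Then a generic linear combination of the $s$ first-order classes is nonzero at every node, and unobstructedness integrates it to a smoothing.

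Note also that the unobstructedness you must invoke anyway makes $(B,0)$ smooth for nodal $X$. So your decision to avoid irreducibility of $B$ was unnecessary: (3) also follows directly from (1) and (2).
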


Parts (ii) and (iii) are proved by different mechanisms.  Part (ii) is
soft.  By (i) each singular point is removed
by \emph{some} deformation, so the locus in the semiuniversal base over
which that point stays singular is a proper closed analytic subgerm; the
singular points are finite in number, so there are finitely many such
loci.  A finite union of proper closed analytic subgerms cannot exhaust
an irreducible germ, and an arc through the origin missing all of them
is a smoothing.  Irreducibility is thus precisely what converts
``removable one point at a time'' into ``removable simultaneously'', and
it is a hypothesis we cannot presently discharge: by Gross
\cite{Gross97} the deformation space of a Calabi--Yau threefold with
canonical singularities need not be smooth, and we do not know whether
the base is irreducible for the hypersurfaces at hand
(Question~\ref{q:multi}).  Part (iii) avoids the
hypothesis altogether by working in homology instead of in the
deformation space.  For nodes, the $\ell(F_i^\circ)$ exceptional curves
over one face all carry the \emph{same} class, so Friedman's relation
can be satisfied one face at a time: one chooses non-zero integer
coefficients summing to zero, which is possible exactly when
$\ell(F_i^\circ) \ge 2$, and the per-face relations simply add.  This
is the same threshold as Theorem~\ref{thm:A}, arrived at from homology
rather than from the existence of a deformation datum, and \S\ref{sec:bk}
takes up the coincidence.

The type-(S) hypothesis enters only at the last step, to make the local
general fibre smooth.  Applied to an \emph{arbitrary} Minkowski
decomposition the same face family therefore realizes globally the
cascade of \cite[Lemma~2.4]{paper1}, replacing each of the
$\ell(F^\circ)$ points over $F$ by one point of type $C(Q_i)$ for every
two-dimensional non-smooth summand $Q_i$
(Proposition~\ref{prop:cascade}).  For the deformable-but-non-smoothable
pentagon of \cite{paper1} this yields an explicit trinomial deformation
between two \emph{non-smoothable} Calabi--Yau threefolds, each of
three deformable-but-non-smoothable germs being traded for a rigid
$\tfrac13(1,1,1)$ point.  This is an obstructed analogue of a conifold
transition, in which the deformation genuinely moves but no fibre is ever
smooth (Remark~\ref{rem:cascade-global}).

The hypothesis $\ell(F^\circ)\ge 2$ is not an artifact of the method.

\begin{theorem}[= Theorem~\ref{thm:B2}]\label{thm:B}
Let $\Delta$ be a reflexive $4$-polytope with unit edges.  If the singular
locus of its generic anticanonical hypersurface $X\subset\PP_\Delta$
consists of a single ordinary double point, then $X$ admits no smoothing.

In particular, let
\[
\Delta_N=\conv\bigl(\square\times\{(1,0)\}\ \cup\ \{e_4,\ (0,1,-1,0),\
(-1,-2,1,-1)\}\bigr)\subset\ZZ^4,
\]
where $\square$ is the unit square embedded with vertices
$(0,0),(0,-1),(1,-1),(1,0)$.  Then $\Delta_N$ is reflexive with $7$
vertices and unit edges; its unique non-smooth two-dimensional face is
$\square\times\{(1,0)\}$, of type \textup{(S)}, with dual edge of lattice
length $1$; the generic anticanonical $X_N$ is a Calabi--Yau threefold
with exactly one ordinary double point; and $X_N$ admits no smoothing.
The maximal projective crepant partial (MPCP) resolution $\widehat X_N$ has
$(h^{1,1}, h^{2,1}) = (3, 103)$ and Euler number $-200$.
\end{theorem}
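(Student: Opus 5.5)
The general statement is a one-node obstruction, and I would prove it by Friedman's criterion for Calabi--Yau threefolds with ordinary double points \cite{Friedman86,RollenskeThomas}.

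Let $p$ be the unique singular point and let $\widetilde X\to X$ be its small resolution, with exceptional curve $C\cong\PP^1$. Friedman's theorem (in the Rollenske--Thomas form for singular Calabi--Yau threefolds) says that $X$ is smoothable if and only if there are coefficients $\lambda_i\neq 0$ with $\sum\lambda_i[C_i]=0$ in $H_2(\widetilde X)$. With a single curve this reduces to $[C]=0$. But $C$ is an effective curve on a projective variety, so it has positive degree against any ample class pulled back from $X$ plus a small relatively ample correction. Hence $[C]\neq 0$ and no smoothing exists.

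I need a projective small resolution, and I would get it from the MPCP resolution $\widehat X$. Over $p$ this is a crepant resolution, and for a node a crepant partial resolution that resolves the point is small. Alternatively, I could bypass projectivity by pairing $C$ with a divisor class that is relatively ample over $X$.

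For $\Delta_N$ I would verify the hypotheses directly.

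First, reflexivity: compute the facet inequalities of $\Delta_N$ and check that each has the form $\langle u,x\rangle\ge -1$ with $u$ integral. This is a finite computation on 7 vertices, and it also yields the vertex list and confirms that all edges are primitive.

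Second, enumerate the two-dimensional faces. Each should be a standard triangle, since their cones are smooth, except $F=\square\times\{(1,0)\}$. The cone over a unit parallelogram is the conifold $xy=zw$, so $F$ is of type (S) via $F=[0,e_1]+[0,-e_2]$.

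Third, compute the dual edge $F^\circ$. Its endpoints are the inner normals of the two facets containing $F$, and I would check that its lattice length is $1$. By \cite[Prop.~3.1]{paper1}, $X_N$ then has exactly $\ell(F^\circ)=1$ singular point of type $C(F)$, which is an ordinary double point. The general statement therefore applies, and $X_N$ is not smoothable.

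The Hodge numbers follow from Batyrev's formulas:
\[
h^{1,1}=\ell(\Delta^\circ)-5-\sum_{\operatorname{codim}\theta^\circ=1}\ell^*(\theta^\circ)+\sum_{\operatorname{codim}\theta^\circ=2}\ell^*(\theta^\circ)\ell^*(\theta),
\]
and the symmetric formula gives $h^{2,1}$. The target is $(3,103)$, so the Euler number is $2(3-103)=-200$. The point counts are mechanical. Note that the correction term for $F$ vanishes, since $\ell^*(F^\circ)=0$. This is consistent with the node being unresolved in $\widehat X_N$, which is why $\widehat X_N$ is only partial and is not the small resolution used above.

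The main obstacle is the relation between $\widehat X_N$ and the small resolution in the first step. The MPCP resolution does not resolve the node, because $F$ has no interior or relative boundary lattice points beyond its vertices. So the small resolution must be constructed separately, for instance by subdividing the fan: add a diagonal of $F$ to obtain a simplicial refinement, which is projective if the subdivision is regular. I would try that first, and fall back on the abstract projectivity argument via a relatively ample divisor if the toric route fails.
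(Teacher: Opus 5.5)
Your argument is the paper's: take a projective small resolution of the node, observe that $[\gamma]\neq 0$ in $H_2(\widehat X;\mathbb{Q})$ because an ample class has positive degree on $\gamma$, and apply Friedman's necessary condition, which with one node forces $[\gamma]=0$. The verification for $\Delta_N$ is also the paper's: finite face computations, \cite[Prop.~3.1]{paper1}, and Batyrev's formulas.

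However, the ``main obstacle'' in your last paragraph rests on a false claim, and it contradicts your own first step. The MPCP resolution \emph{does} resolve the node. An MPCP subdivision is a projective subdivision of the face fan into \emph{simplicial} cones that uses all lattice points of $\partial\Delta$, since the ambient must be $\mathbb{Q}$-factorial and terminal. So the non-simplicial cone over the square must be cut. The square's only lattice points are its four vertices, so it is cut by a single diagonal wall. Hence $\widehat X\to X$ is already the projective small resolution you want, with exceptional locus one rational curve. It is projective because the MPCP subdivision is, which is exactly how the paper obtains projectivity.

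In particular $\widehat X_N$ is a smooth Calabi--Yau threefold, as it must be for your $\chi=2(h^{1,1}-h^{2,1})=-200$ to make sense. Your toric fallback, a regular subdivision inserting a diagonal of $F$, is fine, but it is not a separate construction: completing it to a regular triangulation of $\partial\Delta$ by all lattice points is precisely an MPCP subdivision. The abstract fallback via a relatively ample divisor is not needed. The vanishing of the correction term for $F$ says only that the resolution over $F$ adds no divisor, i.e.\ that it is small, not that the node survives.

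A smaller slip concerns normalization. In the paper's convention $\Delta$ is the fan polytope and $\Delta^\circ$ the Newton polytope. Batyrev's $h^{1,1}$ formula then starts from $\ell(\Delta)-5$ and sums over faces of $\Delta$. The formula you wrote, starting from $\ell(\Delta^\circ)$, is the one for $h^{2,1}$, so taken literally it would return the pair in the wrong order.
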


Thus at $\ell(F^\circ)=1$ the local-to-global obstruction is real even
though the local
model is the simplest smoothable singularity there is.  The obstruction
is Friedman's: the maximal projective crepant partial resolution of $X_N$
is a \emph{projective small} resolution, so the exceptional curve class
cannot vanish, while a one-node Calabi--Yau threefold is smoothable only
if it does.  For the following divisorial germs, the later mixed
criterion of \cite{paper5} also gives a nonsmoothability result:

\begin{proposition}[= Proposition~\ref{prop:C2}]\label{prop:C}
Let $\Delta$ be reflexive with unit edges whose unique non-smooth
two-dimensional face $F$ has exactly one interior lattice point, is of
type \textup{(S)}, and has $\ell(F^\circ) = 1$.  Then the singular locus
of $X$ is one point, the anticanonical cone over $S \in
\{\PP^1{\times}\PP^1,\ \mathrm{dP}_7,\ \mathrm{dP}_6\}$, and an MPCP
resolution $\widehat X \to X$ can be chosen whose exceptional locus is
the smooth irreducible surface $S$, contracted to the point.  The
contraction has relative Picard number
$\rho(\widehat X/X) = \rho(S) \in \{2,3,4\}$, so it factors in the projective category and
is \emph{never} a primitive contraction, and $X$ is not
$\mathbb{Q}$-factorial at the point.  The map
$H_2(S;\mathbb Q)\to H_2(\widehat X;\mathbb Q)$ is injective;
the necessary condition of \cite[Thm.~A]{paper5}, recalled in
\S\ref{sec:germ}, therefore implies that $X$ is not smoothable.
\end{proposition}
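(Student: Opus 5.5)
First I identify the singular locus. By \cite[Prop.~3.1]{paper1} the singular points of $X$ correspond to the non-smooth two-dimensional faces. Each such face $F$ contributes $\ell(F^\circ)$ points, and the germ at each of them is $C(F)$. Here $F$ is the only non-smooth face and $\ell(F^\circ)=1$, so $X$ has exactly one singular point $p$, with germ $C(F)$.

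Next I classify $F$. It is a lattice polygon with exactly one interior lattice point, so $F$ is one of the $16$ reflexive polygons. The germ $C(F)$ is then the affine cone over the toric del Pezzo surface $S_F$ attached to the polygon $F$, embedded anticanonically. By \cite[Thm.~2.2]{paper1}, type (S) means $F$ is a Minkowski sum of unit segments and standard triangles. Running through the 16 reflexive polygons, the only ones with such a decomposition and singular cone are:
\begin{itemize}
\item the square $[0,1]^2$ dilated appropriately, i.e.\ the polygon of $\PP^1\times\PP^1$, which is a sum of two segments;
\item the pentagon of $\mathrm{dP}_7$, which is $\Delta_2+$ a segment;
\item the hexagon of $\mathrm{dP}_6$, which is $\Delta_2+(-\Delta_2)$ or a sum of three segments.
\end{itemize}
I would make sure to use $F$ itself rather than its dual. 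Any other reflexive polygon either has smooth cone, such as the triangle of $\PP^2$ with its standard cone, or fails the decomposition. This short case check also gives $S$ smooth, so the germ is the anticanonical cone over $S\in\{\PP^1\times\PP^1,\mathrm{dP}_7,\mathrm{dP}_6\}$.

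Then I build the resolution. I choose the MPCP subdivision so that it first star-subdivides $C(F)$ at its unique interior lattice point, and then refines the rest of the fan. Locally this is the blow-up of the cone vertex, i.e.\ the total space of $K_S$. Its exceptional divisor is $S$, a smooth irreducible surface, since no further lattice points of $F$ need subdividing once the interior point is used. The relative Picard number equals the number of relative curve classes, $\rho(S)=\mathrm{rk}\,\mathrm{Pic}(S)$, which is $2,3,4$ respectively. Because $\rho>1$, the contraction is not primitive. Since the exceptional locus is a divisor mapping to a point, $X$ is not $\mathbb Q$-factorial at $p$: otherwise the exceptional divisor of a crepant birational map to a $\mathbb Q$-factorial variety would be impossible with $\rho(\widehat X/X)>1$. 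Concretely, the local class group of the cone over $S$ is $\mathrm{Pic}(S)/\ZZ K_S$, which has positive rank.

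The main obstacle is the injectivity of $H_2(S;\mathbb Q)\to H_2(\widehat X;\mathbb Q)$. My approach is to pair curves in $S$ with global divisors. The toric divisors of $\widehat X$ are restrictions of toric divisors $D_\rho$ of the MPCP resolution $\widehat{\PP}$, and torus-invariant curves $C\subset S$ correspond to edges of $F$. The intersection numbers $D_\rho\cdot C$ are computed by wall relations in the fan of $\widehat\PP$ around the face $F$. The key input is that the rays over the boundary points of $F$ restrict to $S$ as the toric boundary divisors of $S$, and these span $\mathrm{Pic}(S)_{\mathbb Q}$. Here $\ell(F^\circ)=1$ is what should ensure that these divisors meet $\widehat X$ in exactly this copy of $S$ and are not disconnected into several copies. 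Thus the pairing $H^2(\widehat X)\times H_2(S)\to\mathbb Q$ is nondegenerate on the $H_2(S)$ side, which gives injectivity. Finally I apply \cite[Thm.~A]{paper5} as recalled in \S\ref{sec:germ}, whose necessary condition for smoothability this injectivity violates; so $X$ is not smoothable.
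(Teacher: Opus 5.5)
Your main line is the paper's. You choose the MPCP subdivision over $\Cone(F)$ to be the star at the interior point $q$, so that the exceptional locus is $S$. The divisors $V(v')\cap\widehat X$ for the vertices $v'$ of $F$ restrict to the boundary curves of $S$ and hence span $H^2(S;\mathbb Q)$. This gives injectivity of $H_2(S;\mathbb Q)\to H_2(\widehat X;\mathbb Q)$ and contradicts \eqref{eq:lonekernel}. That part is sound, with two small corrections. The $T$-invariant curves of $S$ correspond to the vertices of $F$ (the edges $(q,v')$ of the star), not to edges of $F$. And $\ell(F^\circ)=1$ plays no role in the restriction argument; it only guarantees a single singular point, which is what lets \eqref{eq:lonekernel} apply.

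The classification step, however, is wrong as written. You never use that $F$ has unit edges, and without that hypothesis your list is false. For example, $[-1,1]^2$ is a sum of four unit segments, and a translate of $3\conv\{0,e_1,e_2\}$ is a sum of three standard triangles; both are reflexive. The unit-edge hypothesis is what cuts the sixteen polygons down to the five face-fan polygons of smooth toric del Pezzo surfaces. It also makes each triangle spanned by $q$ and an edge unimodular, which is why $S$ is smooth. Among these five:
\begin{itemize}
\item The $\PP^2$ triangle $\conv\{e_1,e_2,-e_1-e_2\}$ does \emph{not} have a smooth cone; $C(F)$ is the $\tfrac13(1,1,1)$ point. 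It and the $\FF_1$ quadrilateral are excluded because they are Minkowski-indecomposable (type (R)).
\item The $\PP^1\times\PP^1$ face is the diamond $\conv\{\pm e_1,\pm e_2\}$, a sum of two unit segments. The dilated square you describe is its dual, with edges of length $2$.
\end{itemize}

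The birational claims are asserted rather than proved. The equality $\rho(\widehat X/X)=\rho(S)$ is not automatic: it is exactly what fails for the primitive realizations of \cite{KapustkaKapustka09}, where $\operatorname{Pic}(\widehat X)\to\operatorname{Pic}(S)$ has rank one. It needs both bounds. The bound $\le$ holds because every contracted curve lies in $S$. The bound $\ge$ comes from the spanning property, which you establish only in your last paragraph, so you need to reorder.

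Saying ``$\rho>1$, hence not primitive'' also does not give the stated factorization in the projective category. Since $K_{\widehat X}\equiv0$, the cone theorem says nothing directly. The paper applies it to the klt pair $(\widehat X,\varepsilon S)$: because $S|_S=K_S$ is anti-ample, every contracted curve is $(K+\varepsilon S)$-negative, so $\overline{NE}(\widehat X/X)$ is rational polyhedral with contractible extremal rays. ``Never'' refers to every projective crepant resolution, which you do not address. The paper handles this by noting that flops connect all of them \cite{Kawamata08} and preserve $\rho$. You should also justify that an MPCP subdivision restricting to the star at $q$ exists; pulling at $q$ first gives a regular, hence projective, triangulation.

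Your non-$\mathbb Q$-factoriality argument is valid and differs from the paper's. If $X$ were $\mathbb Q$-factorial, then for $f\colon\widehat X\to X$ and every Cartier $D$ on $\widehat X$, the divisor $D-f^\ast f_\ast D$ would be a multiple of $S$, forcing $\rho(\widehat X/X)\le1$. This global argument is cleaner than the paper's Gysin-sequence computation, though the paper's shows more: a specific divisor is not even analytically $\mathbb Q$-Cartier. Your argument rests on the lower bound $\rho\ge2$. Your local class-group aside would not suffice alone, as the paper's nodal-quintic remark shows.
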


The statements assemble into a germ-dependent picture of the
local-to-global problem
(Table~\ref{tab:picture}): for type-(S) faces the dual-edge length
$\ell(F^\circ) \ge 2$ smooths the points over the face, and the
face-wise smoothings assemble in the
nodal case and, in general, whenever the deformation space of $X$ is
irreducible (Theorem~\ref{thm:Amulti}); at $\ell(F^\circ) = 1$ the
picture is governed by the crepant resolution over the point.  For the
node the resolution is small and Friedman's relation obstructs
(Theorem~\ref{thm:B}).  For the germs over a face with a single interior
lattice point ($i(F)=1$) the resolution is
divisorial, but, in contrast to the primitive type~II contractions
analyzed by Gross, Proposition~\ref{prop:C} shows the toric
contraction always has relative Picard number $\ge 2$ and $X$ is not
$\mathbb{Q}$-factorial.  The injection of the exceptional surface's
second homology into that of the resolution rules out a smoothing by
the necessary condition recalled in \S\ref{sec:germ}.  At the level of
germs, Gross's
theorem for primitive realizations fails exactly when the exceptional
surface is $\PP^2$ or $\FF_1$, and these are precisely the two classes with one
interior point that are \emph{reduced-rigid} in the trichotomy
(Theorem~\ref{thm:trich}).
The local trichotomy and the global smoothing problem see the same
dichotomy.

\begin{table}[ht]
\adjustbox{max width=\textwidth}{%
\begin{tabular}{lcc}
\toprule
germ of the type-(S) face & $\ell(F^\circ)=1$ & $\ell(F^\circ)\ge 2$\\
\midrule
unit square (node; crepant res.\ small) &
\textbf{non-smoothable} (Thm~\ref{thm:B}) &
smoothable (Thm~\ref{thm:A})\\
$i(F)=1$ (cone over $\PP^1{\times}\PP^1,\mathrm{dP}_7,\mathrm{dP}_6$; divisorial) &
non-smoothable (Prop.~\ref{prop:C}; \cite{paper5}) & smoothable (Thm~\ref{thm:A})\\
$i(F)\ge2$ type (S) (divisorial, reducible exceptional locus) &
open & smoothable (Thm~\ref{thm:A})\\
\bottomrule
\end{tabular}}
\medskip
\caption{The local-to-global picture by germ and dual-edge length, for a
single
singular face.  For several singular faces see
Theorem~\ref{thm:Amulti}: the points over each face are smoothed
individually, the
all-conifold case is unconditional, and the general case assembles when
the deformation space is irreducible; mixed configurations involving
$\ell=1$ nodes are governed by cross-face Friedman relations
(\S\ref{sec:bk}).}\label{tab:picture}
\end{table}

Section~\ref{sec:bk} confronts this picture with the Batyrev--Kreuzer
census.  For the $\ell(F^\circ)$ nodes over a single square face the
exceptional classes coincide, so Friedman's all-nonzero relation is
satisfiable face-locally if and only if $\ell(F^\circ) \ge 2$; this
reproduces the threshold of Theorem~\ref{thm:A} from pure homology.  A
scan of the verified database copy described in \S\ref{sec:bk}
shows that this per-face condition
accounts for exactly $3{,}774$ of the $30{,}241$
conifold polytopes that Batyrev--Kreuzer show to be smoothable, one
in eight, so the great majority of their smoothings require
relations among exceptional curve classes from different faces.

All computations are performed by the exact-arithmetic programs of the
companion paper, extended by a routine enumerating the conifold polytopes of the
Kreuzer--Skarke list; they are
included as ancillary files and are public, with all data, at
\begin{center}
\url{https://github.com/bwuebben/calabi-yau-smoothability}\,.
\end{center}

\subsection*{Acknowledgements}
I am grateful to Mark Gross for first drawing my attention, many years
ago, to the problem of smoothing canonical singularities on Calabi--Yau
threefolds.
The author used LLMs during the development of this work for literature exploration, editorial and coding assistance and as an adversarial reader.

\section{Preliminaries}\label{sec:prelim}

We keep the notation of \cite{paper1} and recall here everything that is
used.  A \emph{standard triangle} is a lattice triangle equivalent to
$\conv\{0,e_1,e_2\}$, and a \emph{unit segment} is a lattice segment of
lattice length one; a \emph{unit parallelogram} is a lattice
parallelogram equivalent to the unit square, and we use the two words
interchangeably, its cone being the ordinary double point.  For a lattice
polygon $Q$ with unit
edges placed at height one, $C(Q)$ denotes the associated Gorenstein
toric affine threefold, and $i(Q)$ the number of interior lattice
points of $Q$.  (Throughout, $C(\cdot)$ \emph{with an argument} denotes
this affine germ; exceptional curves are written $\gamma$, $\gamma_i$,
$\gamma_F$, so that the two conventions in force here, Altmann's
$C(\cdot)$ for the cone and Friedman's $C_i$ for the exceptional curves
\cite{Friedman86,BK10}, do not collide.)  The reduced miniversal base
of $C(Q)$ has one smooth
component per maximal Minkowski decomposition of $Q$
\cite{Altmann97}, the smoothing components corresponding to
decompositions into unit segments and standard triangles \cite{CFP};
$Q$ is of \emph{type (S)} when such a decomposition exists.  The four
imported statements used below are the following; we give each a number,
and refer to it by that number rather than by name.

\begin{theorem}[the trichotomy; {\cite[Thm.~2.2]{paper1}}, packaging
\cite{Altmann97,CFP}]\label{thm:trich}
Let $Q$ be a unit-edge lattice polygon that is not a standard triangle, so
that $C(Q)$ is singular.  Then $Q$ is of exactly one of three types:
\textup{(S)}, admitting a Minkowski decomposition into unit segments and
standard triangles, so that $C(Q)$ is smoothable; \textup{(R)}
\emph{(reduced-rigid)}, admitting no non-trivial Minkowski decomposition, so that
$C(Q)$ has no positive-dimensional deformations (the reduced miniversal
base is a point; first-order deformations may exist); or \textup{(D)}, admitting a
non-trivial decomposition but none into unit segments and standard
triangles, so that $C(Q)$ deforms but is not smoothable.
\end{theorem}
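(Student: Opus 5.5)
The trichotomy is imported from \cite[Thm.~2.2]{paper1}, which packages \cite{Altmann97,CFP}. The plan is to recover it by combining Altmann's description of the reduced miniversal base with the smoothing criterion of Corti--Filip--Petracci. The three types are defined by mutually exclusive and jointly exhaustive conditions on the Minkowski decompositions of $Q$. Either $Q$ has a decomposition into unit segments and standard triangles, or it has no non-trivial decomposition at all, or it has some non-trivial decomposition but none of the first kind. These cases are disjoint once we check that a standard-triangle-and-segment decomposition of $Q$ is non-trivial. That holds because $Q$ is not itself a standard triangle or a unit segment; a unit segment is excluded since $Q$ is a polygon. So the real content is the three geometric consequences.

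For (R) I would invoke Altmann \cite{Altmann97}. For a Gorenstein toric threefold $C(Q)$ with $Q$ lattice with unit edges, the reduced miniversal base has one irreducible component for each maximal Minkowski decomposition of $Q$. Each component is smooth, of dimension one less than the number of summands. If $Q$ is Minkowski indecomposable, the only maximal decomposition is $Q$ itself, which gives a $0$-dimensional component, i.e.\ a point. Hence there are no positive-dimensional deformations. The parenthetical remark only notes that $T^1$ may still be nonzero, with the obstructions killing every direction; Altmann's computation of $T^1$ in degree $-R^\ast$ shows this can happen.

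For (S) and (D) I would use \cite{CFP}. A component of the miniversal base is a smoothing component exactly when the summands of the corresponding maximal decomposition are all unit segments or standard triangles. Indeed, the general fibre over a component is locally a product of the cones over the summands, and such a cone is smooth iff the summand is a standard triangle or a segment. Hence $C(Q)$ is smoothable iff some maximal decomposition is of that form, which is type (S). Otherwise every component has a singular general fibre. In case (D) some component is positive-dimensional, so $C(Q)$ deforms but is not smoothable.

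The main obstacle is the identification, in the (S)/(D) step, of the general fibre over the component attached to a decomposition. This is the hard part of \cite{CFP}, and I would cite it rather than reprove it.
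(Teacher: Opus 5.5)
Your proposal is correct and takes the same route as the paper, which (via \cite[Thm.~2.2]{paper1}) combines Altmann's description of the reduced miniversal base by maximal Minkowski decompositions with the Corti--Filip--Petracci identification of the smoothing components, exactly as you do. Two points of phrasing need care. First, the general fibre over a component is not locally a product of the summand cones; rather, it has one singular point of type $C(R_i)$ for each two-dimensional non-smooth summand (Lemma~\ref{lem:cascade}). Second, your step ``smoothable iff some maximal decomposition is of that form, which is type (S)'' tacitly uses that a decomposition into unit segments and standard triangles is automatically maximal, because these summands are indecomposable.
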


\begin{lemma}[local-to-global obstruction;
{\cite[Lemma~4.1]{paper1}}]\label{lem:ltg}
A deformation of $X$ over a reduced irreducible base induces, near each
singular point, a deformation of the germ.  Hence if the miniversal base
of some germ of $X$ has no smoothing component, then no such deformation
of $X$ (in particular no smoothing) has smooth general fibre.
\end{lemma}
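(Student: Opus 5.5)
The argument is local-to-global. First I localize: a deformation of $X$ over a reduced irreducible base $(T,0)$ restricts, near each singular point $p$, to a deformation of the analytic germ $(X,p)$. This is the standard comparison of global and local deformation functors: the induced map $\mathrm{Def}_X \to \prod_p \mathrm{Def}_{(X,p)}$ is obtained by restricting the family to a small neighbourhood of $p$. Concretely, if $\mathcal X\to T$ is the family, I choose a neighbourhood $U\ni p$ in $\mathcal X$ biholomorphic to a representative of the germ. By versality of the miniversal deformation of $(X,p)$, the restricted family $\mathcal X|_U\to T$ is pulled back by a holomorphic map germ $\phi_p\colon (T,0)\to (B_p,0)$ into the miniversal base of $(X,p)$.

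Next I use that $T$ is reduced and irreducible. Then $\phi_p$ factors through the reduction $(B_p)_{\mathrm{red}}$. Since the image of an irreducible germ is irreducible, it lands in a single irreducible component $Z$ of $(B_p)_{\mathrm{red}}$. Concretely, $\phi_p^{-1}(Z)$ is a closed subgerm of $T$, and the union of these preimages over all components of $(B_p)_{\mathrm{red}}$ covers $T$. Because $T$ is irreducible, one of them is all of $T$.

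Now suppose the general fibre of $\mathcal X\to T$ were smooth. Then the fibres of the local family over a general point $t$ near $0$ are smooth near $p$, since smoothness is open and $U$ is a small neighbourhood. These fibres are the fibres of the miniversal family over $\phi_p(t)\in Z$. Hence the general fibre over $Z$ is smooth, because the singular locus of the miniversal family over $Z$ is closed and misses $\phi_p(t)$ for general $t$. By definition this makes $Z$ a smoothing component. This contradicts the hypothesis that $(B_p)_{\mathrm{red}}$ has no smoothing component. By the trichotomy (Theorem~\ref{thm:trich}), that hypothesis holds exactly for germs of type (R) and (D). For type (R), $(B_p)_{\mathrm{red}}$ is a single point, whose fibre is the singular germ itself.

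I expect the main obstacle to be the precise meaning of ``general fibre smooth near $p$''. The argument needs the restriction of $\mathcal X$ to a fixed neighbourhood $U$, not only the germ at $p$, so that nearby fibres remain inside $U$. It also needs openness of the smooth locus for the map $\phi_p$. To handle this, I would use the Grauert--Grothendieck construction of the miniversal family as a representative over a Stein neighbourhood. I would then argue with the closed critical locus of $\mathcal X|_U\to T$, whose image in $T$ is a proper analytic subset exactly when the general fibre is smooth on $U$.
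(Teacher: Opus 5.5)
Your proof is correct and is the standard argument. The paper does not reprove this lemma; it imports it from \cite[Lemma~4.1]{paper1}. The only in-paper comparison is the proof of Theorem~\ref{thm:A3}(i)--(ii), which uses your mechanism. There the relative singular locus of a representative is finite over the base, its image (the discriminant) is a closed analytic subgerm, and a fibre is smooth near the point exactly off that image.

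A small streamlining is available. If no component of $(B_p)_{\mathrm{red}}$ is a smoothing component, every component lies in the discriminant. Then \emph{every} fibre of \emph{every} deformation is singular near $p$ for $t$ near $0$, with no irreducibility needed. Irreducibility of a global base is used only to pass from ``near $0$'' to ``general'': the closed set of $t$ with singular fibre contains an open set.

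This also avoids a minor point in your second paragraph. A globally irreducible base can have a reducible germ at $0$, such as a nodal curve. In that case your step ``one preimage is all of $T$'' must be run branch by branch.
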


\begin{lemma}[the cascade; {\cite[Lemma~2.4]{paper1}}]\label{lem:cascade}
Let $Q = R_0 + \dots + R_r$ be a Minkowski decomposition of a unit-edge
polygon into lattice summands, and consider
Altmann's homogeneous deformation of $C(Q)$ attached to it, realized on
the Cayley cone $\Cone\bigl(\bigcup_i R_i\times\{f_i\}\bigr)$.  Its
general fibre has exactly one singular point of type $C(R_i)$ for each
two-dimensional non-smooth summand $R_i$, and no others.  In particular
the general fibre is \emph{smooth} when every summand is a unit segment
or a standard triangle.
\end{lemma}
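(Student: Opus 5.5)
The statement just displayed is Lemma~\ref{lem:cascade}, which is imported from \cite[Lemma~2.4]{paper1}. I would nevertheless prove it directly from Altmann's description of homogeneous deformations. First, write $C(Q)=\Spec\CC[\sigma^\vee\cap M]$ with $\sigma=\Cone(Q\times\{1\})$. Altmann's deformation attached to $Q=R_0+\dots+R_r$ is the projection of the $(r+4)$-dimensional Cayley cone
\[
\widetilde\sigma=\Cone\Bigl(\textstyle\bigcup_i R_i\times\{f_i\}\Bigr)\subset N\times\ZZ^{r+1}
\]
to the $r+1$ deformation parameters $t_i=\chi^{f_i^\ast}$, where $f_0,\dots,f_r$ is the standard basis. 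The total space is the affine toric variety $\widetilde{\mathcal X}=X_{\widetilde\sigma}$. The special fibre over $t_i-t_j=0$ recovers $C(Q)$, because the height-one slice of $\widetilde\sigma$ along the diagonal is the Minkowski sum $\sum R_i$. So the plan is to describe the singularities of a general fibre $\{t_i=c_i\}$ with $c_i$ pairwise distinct, through the torus-orbit stratification of $\widetilde{\mathcal X}$.

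The key steps are as follows.

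\emph{Step 1 (reduce to orbits).} The map $\widetilde{\mathcal X}\to\CC^{r+1}$ is equivariant for the big torus of $\widetilde{\mathcal X}$. Along each orbit $O_\tau$, for $\tau\preceq\widetilde\sigma$, $\widetilde{\mathcal X}$ is locally a product of $O_\tau$ with the transversal slice $U_\tau$. A general fibre meets $O_\tau$ transversally exactly when the characters $t_i$ restricted to $O_\tau$ are nonvanishing and independent enough. This holds precisely when $\tau$ does not contain two rays coming from different summands in an incompatible way.

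\emph{Step 2 (identify the faces hit).} I would show that if $c_i\neq c_j$ for all $i\neq j$, then the general fibre only meets orbits of faces $\tau$ lying over a single summand, namely $\tau\subseteq\Cone(R_i\times\{f_i\})$. This is because the equations $t_i=c_i$ with $c_i\ne0$ force the coordinates of the other summands to be nonzero. The faces $\tau=\Cone(G\times\{f_i\})$, with $G\preceq R_i$, give fibre germs $U_\tau\times(\text{smooth})$. Here $U_\tau$ is the affine toric variety of the cone over $G$, which is singular only if $G$ is a two-dimensional non-smooth polygon, i.e.\ $G=R_i$ itself.

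\emph{Step 3 (count).} The orbit of the full face $\Cone(R_i\times\{f_i\})$ has dimension $(r+4)-3=r+1$, and the fibre map restricted to it is finite of degree one onto the base torus. This degree-one statement uses primitivity of the lattice generated by the $f_i$ and the fact that $R_i$ is a lattice polygon at height $f_i$. Hence each general fibre contains exactly one point of type $C(R_i)$, and is smooth elsewhere because lower faces give smooth or unit-segment-type, i.e.\ smooth, slices. When every $R_i$ is a segment or standard triangle, all $C(R_i)$ are smooth, which gives the last claim.

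The main obstacle is Step~2: making precise which orbits a general fibre meets and checking the degree-one claim in Step~3. I would handle both by explicit coordinates on $\widetilde\sigma^\vee$. The dual cone is cut out by the support functions $h_{R_i}$, and the condition is checked on the semigroup generators $(m,-h_{R_i}(m))$. If that becomes messy, I would fall back on the flatness argument in Altmann's paper \cite{Altmann97} and simply cite the fibre description there.
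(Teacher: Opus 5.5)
\textbf{There is a genuine gap: your proposal misidentifies the family, and Steps~2--3 fail for the fibres you actually use.} The paper gives no proof of this lemma (it is imported from \cite[Lemma~2.4]{paper1}), so your argument has to stand on its own.

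\emph{The setup.} The summands $R_i$ live in the plane of $Q$, not in $N=\ZZ^3$. So the Cayley cone lies in $\ZZ^2\times\ZZ^{r+1}$ and $X_{\widetilde\sigma}$ has dimension $r+3$, not $r+4$. The deformation has $r$ parameters, namely the differences $t_j-t_0$ of the height characters $t_j=\chi^{f_j^\ast}$. This is also how the paper writes the family in Step~3 of Lemma~\ref{lem:ident}. The special fibre is $t_0=\dots=t_r$, and a general fibre is $\{t_j-t_0=c_j\}$, i.e.\ $t_j=\lambda+c_j$ with $c_0=0$ and $\lambda$ free. It is not $\{t_j=c_j\}$.

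\emph{What goes wrong with your fibres.} Take $\{t_j=c_j\}$ with all $c_j\neq0$. Then $\prod_j t_j=\chi^{\sum_j f_j^\ast}$, and $\sum_j f_j^\ast$ equals $1$ on every generator of $\widetilde\sigma$. So this character is non-vanishing only on the open torus orbit. Your fibre therefore lies entirely in the torus and is smooth, with no $C(R_i)$ points. The degree-one map of Step~3, from the orbit of $\Cone(R_i\times\{f_i\})$ onto the base torus, cannot exist either, because $t_i\equiv0$ on that orbit. So the reason you give in Step~2 (``$c_i\neq0$ forces the other coordinates to be nonzero'') is not the mechanism. Step~1's transversality criterion is only asserted.

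\emph{The missing idea.} What distinctness of the $c_j$ buys is this: because of the free shift $\lambda$, at most one $t_i$ can vanish at a point of the fibre. Indeed $t_i=t_k=0$ would give $c_i=c_k$. Hence every point of the fibre lies in $\{t_j\neq0\ \forall j\neq i\}=X_{\tau_i}$ for some $i$, where $\tau_i=\Cone(R_i\times\{f_i\})$.

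\emph{The chart computation.} For two-dimensional $R_i$, split the lattice as $(\ZZ^2\oplus\ZZ f_i)\oplus\bigoplus_{j\neq i}\ZZ f_j$. This gives $X_{\tau_i}\cong C(R_i)\times(\CC^\ast)^r$. Under this isomorphism $t_i$ is the height function $h$ of $C(R_i)$, and the $t_j$ with $j\neq i$ are the torus coordinates. The fibre equations are equivalent to $t_j=h+c_j-c_i$ for $j\neq i$. So in this chart the fibre is the graph over the open set $\{h\neq c_i-c_j\ \forall j\neq i\}\subset C(R_i)$, which contains the vertex. Point and segment summands work the same way, with an extra torus factor, and give smooth charts. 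This one computation replaces Step~1 and yields the count of Step~3 directly.

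\emph{A further omission.} You also need that every edge of a Minkowski summand of a unit-edge polygon is itself a unit edge. This holds because the summand edges in a given direction have lengths adding up to the length of the corresponding edge of $Q$. It is what makes $C(R_i)$ an isolated singularity and the segment summands smooth. Without it you do not get exactly one singular point per two-dimensional non-smooth summand.
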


\begin{proposition}[the singular locus of $X$;
{\cite[Prop.~3.1]{paper1}}]\label{prop:planting}
Let $\Delta$ be reflexive with unit edges and let $X \in |{-K}|$ be
generic.  Then $X$ is a Calabi--Yau threefold with Gorenstein canonical
singularities, and for each two-dimensional face $F$ with $C(F)$
\emph{singular}, $X$ meets the toric curve $V(\sigma_F)$ in exactly
$\ell(F^\circ)$ points, all in the open orbit, at each of which an
implicit-function argument on the chart
$U_{\sigma_F} \cong C(F)\times\CC^\ast$ identifies the germ of $X$ with
$C(F)$.  Away from these points $X$ is smooth.
\end{proposition}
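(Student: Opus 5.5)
The plan is to stratify $\PP_\Delta$ by torus orbits and study the generic $X$ one stratum at a time. Three inputs do the work: a Bertini argument, the smoothness of all cones of the face fan of dimension at most two, and a computation on the chart $U_{\sigma_F}$ for each two-dimensional face $F$.

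\emph{Global properties and the low-dimensional cones.} Since $\Delta$ is reflexive, $-K_{\PP_\Delta}$ is Cartier and ample, so its linear system is base-point free. Its space of sections has basis the monomials $\chi^m$, $m\in\Delta^\circ\cap\ZZ^4$. Batyrev's theorem \cite{Batyrev94} then gives that the generic $X$ is a Calabi--Yau threefold with Gorenstein canonical singularities.

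Next I will use a Bertini argument on each orbit $O(\sigma)$: a generic member of a base-point-free system meets every orbit transversally, so $X\cap O(\sigma)$ is smooth of the expected dimension. In particular $X$ misses the zero-dimensional orbits. The orbit of the cone over a facet $G$ is a fixed point, and the monomial $\chi^{m_G}$ at the vertex $m_G$ of $\Delta^\circ$ dual to $G$ does not vanish there.

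The cones of dimension at most two are all smooth:
\begin{itemize}
\item $\{0\}$ is trivially smooth;
\item each ray is spanned by a vertex of $\Delta$, which is primitive by reflexivity;
\item each two-dimensional cone is the cone over an edge, which has lattice length one; placed at height one, its two endpoints form a basis of the saturated rank-two sublattice they span.
\end{itemize}
Transversality to a smooth stratum in a smooth chart gives smoothness of $X$ there. So $X$ can be singular only along the one-dimensional orbits. These are exactly the curves $V(\sigma_F)$ with $\sigma_F=\Cone(F)$ for a two-dimensional face $F$, and $X$ is singular there only when $C(F)$ is singular.

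\emph{The chart and the count.} The cone $\sigma_F$ is three-dimensional and spans a saturated rank-three sublattice $N_F$. Choosing a complement gives $U_{\sigma_F}\cong U_{\sigma_F,N_F}\times\CC^\ast=C(F)\times\CC^\ast$, with coordinate $t$ on $\CC^\ast$. Here $V(\sigma_F)\cap U_{\sigma_F}=\{0\}\times\CC^\ast$, where $0$ is the vertex of $C(F)$.

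In the trivialization by $\chi^{m_0}$ for some $m_0\in F^\circ$, a section $s=\sum a_m\chi^m$ splits into two parts:
\begin{itemize}
\item the monomials with $m\in F^\circ$ restrict to units on the orbit and, up to a monomial factor, to powers $t^k$ with $k$ ranging over the lattice points of the edge $F^\circ$;
\item all other monomials lie in the maximal ideal of the vertex of $C(F)$.
\end{itemize}
Hence $s|_{\{0\}\times\CC^\ast}$ is a Laurent polynomial $f_0(t)$ whose Newton segment has length $\ell(F^\circ)$. For generic coefficients its roots are simple and nonzero, so $X$ meets $V(\sigma_F)$ in exactly $\ell(F^\circ)$ points, all in the open orbit. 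This matches $\deg(-K|_{V(\sigma_F)})=\ell(F^\circ)$; the fixed points at the ends of $V(\sigma_F)$ are already excluded.

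\emph{Local form.} Near a root $t_0$ write $s=f_0(t)+g(u,t)$, where $u$ ranges over $C(F)$ and $g$ vanishes at $u=0$. Since $f_0'(t_0)\neq0$ we have $\partial s/\partial t\neq0$ at $(0,t_0)$. The implicit function theorem, applied on the ambient embedding of $C(F)\times\CC^\ast$ with $t$ a free coordinate, solves $t=h(u)$. The projection to $C(F)$ then restricts to an analytic isomorphism from the germ of $X$ at $(0,t_0)$ onto the germ $(C(F),0)$.

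I expect the main obstacle to be the bookkeeping in the chart: checking that the monomials surviving on the orbit are exactly the lattice points of $F^\circ$, and that all others lie in the ideal of the vertex. This follows from $F^\circ=\{m\in\Delta^\circ:\langle m,v\rangle=-1\ \forall v\in F\}$ together with $\langle m,v\rangle\ge-1$ on $\Delta$. The implicit-function step itself is routine once this splitting is established.
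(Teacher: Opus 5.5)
Your proposal is correct and takes essentially the same route as the argument the paper imports from \cite[Prop.~3.1]{paper1} and recapitulates in the proof of Lemma~\ref{lem:transverse}. There, genericity together with smoothness of the cones over vertices and unit edges confines the singularities to the one-dimensional orbits; the section restricted to $V(\sigma_F)$ is a generic degree-$\ell(F^\circ)$ polynomial in the $\CC^\ast$-coordinate with simple nonzero roots; and the nonvanishing $t$-derivative at each root exhibits $X$ locally as a graph over $C(F)$ (the paper phrases this last step as Weierstrass division, which your implicit-function argument also gives).
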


Throughout, $\Delta$ is the \emph{fan} polytope, Batyrev's $\Delta^\ast$,
so $\PP_\Delta$ is the toric fourfold of its face fan and our
$\Delta^\circ$ is his $\Delta$, the Newton polytope of the anticanonical
sections.  We use the standard normalization
$\Delta^\circ = \{u : \langle u, v\rangle \ge -1 \ \forall v \in
\Delta\}$, so that a facet of $\Delta$ corresponds to the vertex $u$ of
$\Delta^\circ$ with $\langle u,\cdot\rangle = -1$ on the facet, and the
dual edge of a two-dimensional face $F$ is
$F^\circ = \conv\{u_1,u_2\} \subset \Delta^\circ$ for the two facets
through $F$.  (\cite{paper1} and the ancillary programs list facet data in
the opposite normalization $\langle \cdot,\cdot\rangle = +1$; lattice
lengths are unaffected.)

\section{Homogeneous deformations attached to two-dimensional faces}
\label{sec:datum}

Petracci \cite{Petracci18} constructs, from a \emph{deformation datum}
for a cone, homogeneous deformations of affine toric varieties
(\cite[Def.~3.1, Thm.~3.5]{Petracci18}, after Mavlyutov \cite{Mav04}) and
of polarized projective toric pairs (\cite[Thm.~6.1]{Petracci18}).
Here $N$ is a lattice, $M = \operatorname{Hom}(N,\ZZ)$ its dual, and
$\sigma \subseteq N \otimes \mathbb{R}$ a cone.  A
$\partial$-deformation datum for $(N,\sigma)$, with $\sigma$ strongly convex and
full-dimensional, is a tuple $(Q, Q_0, \dots, Q_r, w)$, $w \in M$,
with (i) $Q \subseteq \sigma$ a non-empty rational polyhedron, (ii)
$0 \notin Q$, (iii)
$Q = Q_0 + \dots + Q_r$, (iv) $Q_1, \dots, Q_r$ lattice polyhedra, (v)
$\min_Q w \ge -1$, and (vi) every vertex of
$\sigma \cap \{\langle w, \cdot\rangle = -1\}$ contained in
$\mathbb{R}^+ Q$.  This is the boundary form of the definition, which is
what \cite[Thm.~6.1]{Petracci18} takes as input.

\begin{lemma}[$\partial$-deformation datum from a codimension-two face]\label{lem:datum}
Let $\Delta \subset \ZZ^4$ be reflexive, $F$ a two-dimensional face with
a Minkowski decomposition $F = Q_0 + \dots + Q_r$ into lattice polytopes,
and let $\tau = \Cone(\Delta\times\{1\}) \subset \ZZ^5$, the cone over the
polarized pair $(\PP_\Delta, \partial\PP_\Delta)$, where $\PP_\Delta$ is the
toric fourfold of the \emph{face fan} of $\Delta$ as in \S\ref{sec:intro}.  Let
$u_1, u_2 \in \Delta^\circ$ be the vertices dual to the two facets of
$\Delta$ containing $F$, so that the dual edge is
$F^\circ = \conv\{u_1, u_2\}$, and suppose $\ell(F^\circ) \ge 2$.  Then
for every \emph{interior} lattice point $v^\ast$ of $F^\circ$, the tuple
\[
\bigl(F\times\{1\},\ Q_0\times\{1\},\ Q_1\times\{0\}, \dots,
Q_r\times\{0\},\ w = (v^\ast, 0)\bigr)
\]
is a $\partial$-deformation datum for $(\ZZ^5, \tau)$ in the sense of
\textup{\cite[Def.~3.1]{Petracci18}}.
\end{lemma}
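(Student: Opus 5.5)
The plan is to check conditions (i)--(vi) of the $\partial$-deformation datum directly, using the pairing $\langle (u,s),(x,t)\rangle = \langle u,x\rangle + st$ between $M=\ZZ^5$ and $N=\ZZ^5$. With this pairing, $w=(v^\ast,0)$ evaluates on $(x,t)$ as $\langle v^\ast,x\rangle$. Before checking the conditions I record one fact: the face of $\Delta$ on which $v^\ast$ attains $-1$ is exactly $F$.

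To see this, write $v^\ast=\lambda u_1+(1-\lambda)u_2$ with $0<\lambda<1$. Such a $v^\ast$ exists as a lattice point precisely because $\ell(F^\circ)\ge 2$. Since $u_1,u_2\in\Delta^\circ$, we have $\langle u_j,x\rangle\ge -1$ on $\Delta$. Hence $\langle v^\ast,x\rangle\ge -1$ on $\Delta$, with equality iff $\langle u_1,x\rangle=\langle u_2,x\rangle=-1$. That is, equality holds iff $x$ lies in the intersection of the two facets dual to $u_1,u_2$. This intersection is $F$, by face duality for the reflexive pair: the edge $F^\circ$ corresponds to the face $F$.

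Conditions (i)--(v) are then routine.
\begin{itemize}
\item[(i)] $F\times\{1\}\subseteq\tau$ is a nonempty polytope.
\item[(ii)] It lies at height $1$, so it misses $0$.
\item[(iii)] $Q_0\times\{1\}+\sum_{i\ge1}Q_i\times\{0\}=(Q_0+\dots+Q_r)\times\{1\}=F\times\{1\}$.
\item[(iv)] $Q_i\times\{0\}$ are lattice polytopes because the $Q_i$ are.
\item[(v)] By the fact above, $w\equiv -1$ on $F\times\{1\}$, so $\min w = -1$.
\end{itemize}
Also, $\tau$ is strongly convex and full-dimensional because $\Delta$ is full-dimensional.

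The substantive step, which I expect to be the only real obstacle, is (vi). The slice $P=\tau\cap\{\langle w,\cdot\rangle=-1\}$ is in general an unbounded polyhedron. Rays $(x,1)$ with $\langle v^\ast,x\rangle=0$ give its recession directions. The plan is to show that every vertex of $P$ lies on an extremal ray $\RR_{\ge0}(v,1)$ of $\tau$, where $v$ is a vertex of $\Delta$ with $\langle v^\ast,v\rangle<0$. This is standard: a vertex of a hyperplane section of a polyhedral cone is the intersection of the hyperplane with an extremal ray on which the functional is negative.

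The decisive input is \emph{integrality}. Both $v^\ast$ and $v$ are lattice points, so $\langle v^\ast,v\rangle\in\ZZ$. Combined with $\langle v^\ast,v\rangle\ge -1$, the condition $\langle v^\ast,v\rangle<0$ forces $\langle v^\ast,v\rangle=-1$. By the fact above, this means $v\in F$. The corresponding vertex of $P$ is then $(v,1)\in F\times\{1\}\subseteq\RR^+Q$, which proves (vi).

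I will close with a remark on why the hypothesis is necessary. At an endpoint $u_j$ of $F^\circ$, the same argument would force the vertices of $P$ to range over the whole facet dual to $u_j$. That facet is not contained in $\RR^+(F\times\{1\})$, so (vi) fails. This is why $v^\ast$ must be interior, and hence why $\ell(F^\circ)\ge2$ is needed.
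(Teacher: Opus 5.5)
Your proposal is correct and follows essentially the same route as the paper. You show that $\langle v^\ast,\cdot\rangle\ge -1$ on $\Delta$ with equality exactly on $F$, read off (v), and obtain (vi) because a vertex of the slice lies on a ray $(V,1)$ whose $w$-value is a negative integer $\ge -1$, hence equal to $-1$; the only difference is that you derive the equality locus from the convex combination $v^\ast=\lambda u_1+(1-\lambda)u_2$, where the paper invokes face duality.
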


\begin{proof}
Conditions (i)--(iv) are immediate ($Q = F\times\{1\}$ lies in $\tau$ at
height one, and the summands may be translated so that $Q_1,\dots,Q_r$
are lattice polytopes and $Q_0$ carries the height).  For (v) and (vi),
evaluate $w$ on the rays of $\tau$, which are $(V,1)$ for the vertices
$V$ of $\Delta$: $\langle w, (V,1)\rangle = \langle v^\ast, V\rangle
\ge -1$ because $v^\ast \in \Delta^\circ$, with equality if and only if
$V$ lies on the face of $\Delta$ dual to the face of $\Delta^\circ$
containing $v^\ast$ in its relative interior.  Since $v^\ast$ is an
interior lattice point of the edge $F^\circ$, that dual face is exactly
$F$: equality holds precisely at the vertices of $F$.  Hence
$w \ge -1$ on the primitive generators of the rays of $\tau$, with
equality exactly for those in $\Cone(F\times\{1\})$; in particular
$w \equiv -1$ on $Q = F\times\{1\}$, so $\min_Q w = -1$, which is (v).
Moreover the
vertices of the slice $\tau \cap \{w = -1\}$ are the points $(V,1)$,
$V \in \operatorname{vert}(F)$: a vertex of the slice lies on a ray of
$\tau$, whose $w$-value is an integer $\ge -1$ and negative only if
equal to $-1$.  So (vi) holds with
$\mathbb{R}^+ Q = \Cone(F\times\{1\})$.
\end{proof}

\begin{remark}
At the two endpoints of $F^\circ$ the slice grows to a full facet of
$\Delta$ and condition (vi) would require a Minkowski decomposition of
that three-dimensional facet; the interior points of the dual edge are
exactly what makes the face-local datum possible.  This is the source of
the threshold $\ell(F^\circ) \ge 2$ in Theorem~\ref{thm:A}, and
\S\ref{sec:node} shows the threshold is sharp.
\end{remark}

By \cite[Thm.~6.1]{Petracci18} the datum of Lemma~\ref{lem:datum} yields
a flat deformation
$(\mathcal{X}, \mathcal{D}) \to T$, $T$ a neighbourhood of
$0 \in \mathbb{A}^r$ \textup(cf.\ \cite[Rem.~3.6]{Petracci18}\textup), of the pair
$(\PP_\Delta, \partial\PP_\Delta)$, cut out by explicit trinomials in the
Cox coordinates of a projective toric variety built from the extended
cone $\tilde\tau = \Cone\bigl(\tau,\ Q_0\times\{1\} - e_1 - \dots - e_r,\
Q_1 + e_1, \dots, Q_r + e_r\bigr) \subset \ZZ^{5+r}$.  We call this the
\emph{face family} of $(F, \sum Q_i, v^\ast)$.

\begin{lemma}[chart identification]\label{lem:ident}
Localized at the points of $X$ over $F$, the face family induces on the
germ $C(F)$ the Altmann--Cayley family of the decomposition
$F = Q_0 + \dots + Q_r$ \textup{(}times a smooth trivial
factor\textup{)}; in particular its general fibre near these points is
the general fibre of Lemma~\ref{lem:cascade}.
\end{lemma}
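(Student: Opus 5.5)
The proof is a localization argument. Petracci's family is built from the datum of Lemma~\ref{lem:datum} by a homogeneous construction. That construction is compatible with restriction to faces of the cone: on the affine toric chart $U_{\sigma_F}$ of $\PP_\Delta$ it should be the homogeneous deformation attached to the restricted datum. I will first identify the chart: $\sigma_F=\Cone(F)$ is the two-dimensional face cone in $\ZZ^4$, and $U_{\sigma_F}\cong C(F)\times\CC^\ast$ as in Proposition~\ref{prop:planting}.

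\textbf{Step 1 (passing from the cone over the pair to the chart).} Take the affine piece of $\Spec\CC[\tau^\vee\cap\ZZ^5]$ given by the nonvanishing of the degree-one coordinate. Further localize along the orbit of $V(\sigma_F)$ by inverting the characters of $M$ that are nonnegative on $\sigma_F$ and vanish on it. In Petracci's description this localization commutes with the deformation. Concretely, the Cox-coordinate trinomials of the face family involve only the variables attached to the rays of $\tilde\tau$. The variables for rays of $\tau$ outside $\Cone(F\times\{1\})$ become units on the chart. What survives is the cone $\tilde\sigma=\Cone\bigl(Q_0\times\{1\}-e_1-\dots-e_r,\ Q_1+e_1,\dots,Q_r+e_r\bigr)$, which is unimodularly the Cayley cone $\Cone\bigl(\bigcup_i Q_i\times\{f_i\}\bigr)$ of Lemma~\ref{lem:cascade}, times the torus factor. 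The key input is that the slice $\tau\cap\{w=-1\}$ equals $\Cone(F\times\{1\})$, which is exactly condition (vi) in the proof of Lemma~\ref{lem:datum}. This guarantees that the weight $w=(v^\ast,0)$ restricted to the chart is the Altmann weight for $C(F)$, with $F$ in the degree $-1$ slice and no other rays contributing. The extra coordinate dual to $(v^\ast,0)$ is a unit on the chart, and it accounts for the trivial $\CC^\ast$ factor.

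\textbf{Step 2 (passing to $X$).} The generic anticanonical section extends to the total space of the family, for example by the polynomial (degree $(0,1)$) characters, which are unaffected by the deformation. Near each of the $\ell(F^\circ)$ points, the implicit-function argument of Proposition~\ref{prop:planting} then runs in families. On $U_{\sigma_F}$ the section has nonzero derivative in the $\CC^\ast$ direction at those points, an open condition that persists for small parameters. So the section cuts the $\CC^\ast$ factor transversally, and the induced deformation of the germ of $X$ is the chart family with the $\CC^\ast$ factor eliminated. By Step~1 this is the Altmann--Cayley family of $F=Q_0+\dots+Q_r$, possibly times a smooth trivial factor. Lemma~\ref{lem:cascade} then describes its general fibre.

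\textbf{Main obstacle.} The hard part is Step~1: checking that Petracci's global trinomial equations, written over the extended cone $\tilde\tau$, restrict on the chart literally to Altmann's equations for the Cayley cone. I would verify this by writing both sides in terms of the semigroup rings. Petracci's affine construction \cite[Thm.~3.5]{Petracci18} is Mavlyutov's and Altmann's construction for $\tau$ with datum $Q$. Localizing a semigroup ring at a face of $\tau^\vee$ commutes with forming the extended cone, since $\tilde\tau$ projects to $\tau$ and the face $\Cone(F\times\{1\})$ lifts to $\tilde\sigma$. Under this correspondence, Altmann's version for $\sigma_F$ with datum $F$ at weight $-1$ is the face-restriction. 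The remaining check is that the deformation parameters $t_1,\dots,t_r$ match, which is immediate since both are indexed by the summands $Q_1,\dots,Q_r$.
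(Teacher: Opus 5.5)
Your route is the same as the paper's. You restrict the face family to the chart of the face $G\preceq\tilde\tau$ over $\Cone(F\times\{1\})$, identify $G$ unimodularly with the Cayley cone, use $v^\ast|_{N_F}=-u_F$, split off a $\CC^\ast$, and quote Lemma~\ref{lem:cascade}. The gap is that the two claims carrying the proof are only asserted, and the reasons you give for them do not establish them.

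\emph{The product with $\CC^\ast$.} The chart $U_{\sigma_F}$ has lattice $N=\ZZ^4$, not $N_F$. On its torus the $j$-th trinomial reads $\chi^{e_j^\ast}-1=t_j\chi^{\tilde w}$ with $\tilde w|_N=v^\ast$. Whether this is constant along the $\CC^\ast$ factor depends on the splitting $N=N_F\oplus\ZZ\zeta$ used to write $U_{\sigma_F}\cong C(F)\times\CC^\ast$. If $\langle v^\ast,\zeta\rangle=c\neq0$, then $\chi^{\tilde w}$ carries a factor $z^{c}$ of the $\CC^\ast$-coordinate $z$. The family is then $\{(x,z):x\in\mathcal C_{z^{c}t}\}$, which is not a product in these coordinates. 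The missing step is to choose $\zeta$ with $\langle v^\ast,\zeta\rangle=0$. This is possible because $v^\ast$ equals $-1$ on the vertices of $F$, hence is surjective on $N_F$. Then every exponent ($e_j^\ast$ and $\tilde w$) vanishes on $\zeta$, and the equations are pulled back from $N_F\oplus\ZZ^r$. Your sentence that ``the coordinate dual to $(v^\ast,0)$ is a unit and accounts for the $\CC^\ast$ factor'' points the wrong way. $(v^\ast,0)$ is the datum weight; what is needed is that $v^\ast$ \emph{annihilate} the $\CC^\ast$ direction.

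\emph{The parameters.} Saying the parameters match ``because both are indexed by $Q_1,\dots,Q_r$'' identifies index sets, not families. You must show that the base coordinates, restricted to the chart, are Altmann's differences of height functions. The paper does this explicitly.
\begin{enumerate}
\item An integral shear normalizes $u_F(Q_0)=1$ and $u_F(Q_i)=0$ for $i\ge1$.
\item The face $G$ is then cut out by $L=(v^\ast,1,0,\dots,0)$; equivalently, one inverts the degree-one character $\chi^{(v^\ast,1)}$.
\item Over a vertex $q_0+\dots+q_r$ of $F$, the rays of $G$ are $(q_0,h_0)$ and $(q_j,h_j)$, with $h_0=(1;-1,\dots,-1)$ and $h_j=(0;\mathbf e_j)$. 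A unimodular matrix carries these to the Cayley cone.
\item Multiplying $\chi^{e_j^\ast}-1=t_j\chi^{-u_F}$ by $a_0=\chi^{u_F}$ gives $a_j-a_0=t_j$, where $a_j=\chi^{u_F+e_j^\ast}$.
\end{enumerate}
Step (iv) is where your key input $v^\ast|_{N_F}=-u_F$ actually enters: it makes the coefficient of $t_j$ equal to $1$.

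\emph{Smaller points.}
\begin{itemize}
\item The slice $\tau\cap\{w=-1\}$ is not $\Cone(F\times\{1\})$. It is an unbounded polyhedron whose vertices are the points $(V,1)$ with $V\in\operatorname{vert}F$.
\item Your Step~2 corresponds to Lemma~\ref{lem:transverse}. There, the characters $(m,1;0,\dots,0)$ are in general not regular on the total space: they are negative on $Q_j+e_j$ whenever $\min_{Q_j}m<0$. So $s_0$ must be extended either by the lifts $(m,1;-\min_{Q_1}m,\dots,-\min_{Q_r}m)$ or by the base-change argument of Lemma~\ref{lem:sections}.
\end{itemize}
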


\begin{proof}
Write $N = \ZZ^4$ for the fan lattice of $\PP_\Delta$, let
$\sigma_F = \Cone(F) \subset N_\RR$ be the cone of the face fan over
$F$, and let $N_F$ be the saturated sublattice spanned by $\sigma_F$, so
that $N_F \cong \ZZ^3$, $M_F = \operatorname{Hom}(N_F,\ZZ)$, and
$C(F) = \Spec\CC[\sigma_F^\vee \cap M_F]$ as in \cite[\S2]{paper1}.
(In Lemma~\ref{lem:datum} the same face appeared as
$\Cone(F\times\{1\}) \subset \ZZ^5$, on the cone over the pair; on
passing to the projective toric variety and its charts, $\ZZ^5$ is
replaced by the fan lattice $N$, and the auxiliary lattice
$\ZZ^5 \oplus \ZZ^r$ of the extended cone by $N \oplus \ZZ^r$.)  A
choice of complement, $N = N_F \oplus \ZZ\zeta$, gives the chart
$U_{\sigma_F} \cong C(F)\times\CC^\ast$ of
Proposition~\ref{prop:planting}, with $\CC^\ast$-coordinate the
character dual to $\zeta$.  The choice matters, and we fix $\zeta$ with
\[
N = N_F \oplus \ZZ\zeta, \qquad \langle v^\ast, \zeta\rangle = 0 .
\]
Such $\zeta$ exists: $v^\ast$ restricts to a surjection $N_F \to \ZZ$
(it takes the value $-1$ at each vertex of $F$), so any generator
$\zeta_0$ of $N/N_F$ may be corrected to $\zeta = \zeta_0 - n$ with
$n \in N_F$ chosen so that
$\langle v^\ast, n\rangle = \langle v^\ast, \zeta_0\rangle$.

\emph{Step 1: reduction to the affine germ.}  At the vertices $V$ of
$F$ the functional $v^\ast$ takes the value $-1$ (proof of
Lemma~\ref{lem:datum}), so $v^\ast|_{N_F} = -u_F$, the \emph{negative}
of the Gorenstein degree of $\sigma_F$ (which is exactly the degree
in which the deformations of an isolated Gorenstein toric threefold
singularity live \cite{Altmann95}, cf.\ \cite[Rem.~3.7]{Petracci18}),
and $(F, Q_0,\dots,Q_r, -u_F)$ is a deformation datum for
$(N_F,\sigma_F)$.

We first normalize the translations of the summands.  Choose a vertex
$v=q_0+\dots+q_r$ of $F$, where $q_i$ is the vertex of $Q_i$ exposed by
a functional exposing $v$, and replace
\[
 Q_0\quad\text{by}\quad Q_0+q_1+\dots+q_r,
 \qquad
 Q_i\quad\text{by}\quad Q_i-q_i\quad (i\geq 1).
\]
These integral translations sum to zero and hence preserve the Minkowski
sum.  Since the differences of points of every $Q_i$ lie in
$\operatorname{span}_{\RR}(F-F)=\ker u_F$, after this normalization
\[
 u_F(Q_0)=1,\qquad u_F(Q_i)=0\quad(i\geq1).
\]
It does not change the family up to isomorphism.  Indeed, putting
$a_i=-q_i$ for $i\geq1$, the lattice shear
\[
 (n,k;z_1,\dots,z_r)\longmapsto
 \bigl(n+\textstyle\sum_i z_i a_i,k;z_1,\dots,z_r\bigr)
\]
fixes the original cone $\tau$ and the base characters $e_i^\ast$, and
carries the old extended cone to the one formed from the translated
summands.  We henceforth suppress the primes.

Write $\tilde\sigma_F = \tilde\tau \cap
(N_F\oplus\ZZ^r)_\RR$ for the extended cone, where $N_F$ is embedded in
$\ZZ^5$ by the height section $n \mapsto (n,u_F(n))$; the displayed
normalization is exactly what places the split summand generators in this
graph lattice.  Restrict the face family to the chart at the face
$G\preceq\tilde\tau$ lying over $\sigma_F$ (the cone of Step~2).  To see
that it is a face, use the functional
\[
 L=(v^\ast,1,0,\dots,0).
\]
On a ray $(V,1)$ of $\tau$ it has value
$\langle v^\ast,V\rangle+1\geq0$, with equality exactly for $V\in F$.
On the normalized split generators it vanishes because
$v^\ast|_{N_F}=-u_F$, $u_F(Q_0)=1$, and $u_F(Q_i)=0$ for $i\geq1$.
Thus its zero cone is precisely $G$: each ray over a vertex
$v=q_0+\dots+q_r$ of $F$ is the sum of the corresponding split
generators.  On the chart's torus the $j$-th trinomial of the family reads
\[
\chi^{e_j^\ast} - 1 \;=\; t_j\,\chi^{\tilde w},
\qquad \tilde w|_N = v^\ast
\]
(divide the trinomial of Step~3 by the unit $\chi^{m_{j,0}}$; that the
$t_j$-coefficient carries the character of the datum weight $w$ is
\cite[Thm.~3.5 and Rem.~3.7]{Petracci18}).  Every exponent here
vanishes on $\zeta$: the $e_j^\ast$ because they annihilate $N$, and
$\tilde w$ because $\langle v^\ast, \zeta\rangle = 0$ by the choice
above.  All $r$ equations are therefore pulled back along the projection
$(N_F \oplus \ZZ^r) \oplus \ZZ\zeta \to N_F \oplus \ZZ^r$, i.e.\
constant in the $\CC^\ast$-direction: the restriction of the family
$(\mathcal X,\mathcal D)\to\mathbb A^r$ to this chart is
\[
\bigl(\text{Petracci's affine family of }(N_F,\sigma_F,\ \textstyle F=\sum Q_i)\bigr)\ \times\ \CC^\ast ,
\]
as in the facet case \cite[Prop.~2.12]{CHP24} (there the relevant cone
is maximal, so no complement need be chosen).  It therefore suffices to
identify the affine family, a deformation of $C(F)$ over
$\mathbb A^r$.

\emph{Step 2: the Cayley cone.}  Each vertex of $F$ has a unique
expression $q_0+q_1+\dots+q_r$ with $q_i\in\operatorname{vert}(Q_i)$:
the normal cone of a vertex of a Minkowski sum is the intersection of the
normal cones of its summand faces, and it is two-dimensional, forcing
every summand face to be a vertex (no hypothesis on the summands is
used, a point Proposition~\ref{prop:cascade} exploits); correspondingly the rays of
$\tilde\sigma_F$ over $F$ are the vectors $(q_0, h_0)$ and $(q_j, h_j)$
$(j=1,\dots,r)$, where in the auxiliary coordinates
$(\text{height}, e_1,\dots,e_r)\in\ZZ^{1+r}$ one has
$h_0 = (1;-1,-1,\dots,-1)$ and $h_j = (0; \mathbf e_j)$.  The
$(r{+}1)\times(r{+}1)$ matrix with rows $h_0,h_1,\dots,h_r$ is
$\left(\begin{smallmatrix} 1 & -\mathbf 1^{\!\top}\\ 0 & I_r\end{smallmatrix}\right)$,
which is unimodular; the inverse automorphism of $\ZZ^{1+r}$ sends
$h_i$ to the standard basis vector $f_i$, and hence carries the face
$G = \Cone\bigl(\bigcup_i Q_i\times\{h_i\}\bigr)$ onto the standard
Cayley cone $\Cone\bigl(\bigcup_i Q_i\times\{f_i\}\bigr)$ of the
decomposition (for $r=1$ this is the matrix
$\left(\begin{smallmatrix}1&-1\\0&1\end{smallmatrix}\right)$).

\emph{Step 3: the family is Altmann's.}  In the normalized graph
lattice, extend $u_F$ by zero on the auxiliary directions and put
\[
 a_0=\chi^{u_F},\qquad a_j=\chi^{u_F+e_j^\ast}
 \quad(1\le j\le r).
\]
The exponents take value one on the $Q_0$ and $Q_j$ generators,
respectively, and zero on all other summand generators.  Thus these
characters are regular on the chart of $G$, and Step~2 identifies them
with the Cayley height functions $\chi^{f_0^\ast}$ and
$\chi^{f_j^\ast}$.  Since the normalized datum weight restricts to
$-u_F$, multiplying the torus equation
$\chi^{e_j^\ast}-1=t_j\chi^{-u_F}$ by $a_0$ gives
\[
 a_j-a_0=t_j \qquad(1\le j\le r).
\]
These are Altmann's equations for the deformation attached to
$F=\sum_iQ_i$ \cite{Altmann95}; the base coordinates are differences of
regular height functions.  Their signs are opposite to the convention
in Lemma~\ref{lem:cascade}, which changes only the signs of the
parameters.  Writing $m_{j,0}=u_F$ and $m_{j,1}=u_F+e_j^\ast$, the
same equations read $\chi^{m_{j,1}}-\chi^{m_{j,0}}=t_j$.  (For the type-\textup{(S)} pentagon
$F = T + s$, the Minkowski sum of a standard triangle $T$ and a unit
segment $s$, the case $r = 1$ is worked out explicitly in the ancillary
files:\footnote{\texttt{paper2\_check.py}.} the five vertices of
$F$ split uniquely, and $m_{1,0} = (0,0,1,0,0,0)$,
$m_{1,1} = m_{1,0} + e_1^\ast$.)

\emph{Step 4.}  By \cite[Lemma~2.4]{paper1} the general fibre of this
family is the general fibre of the Altmann family of the decomposition:
one singular point of
type $C(Q_i)$ for each two-dimensional non-smooth summand, and no other
singular points.  This proves the lemma.
\end{proof}

\section{Smoothing over one face, and over several}\label{sec:proofA}

\begin{theorem}[restatement of Theorem~\textup{\ref{thm:A}}]\label{thm:A2}
Let $\Delta$ be reflexive with unit edges and let $F$ be a
two-dimensional face of type \textup{(S)}, with a Minkowski decomposition
$F = Q_0 + \dots + Q_r$ into unit segments and standard triangles, and
suppose $\ell(F^\circ) \ge 2$.  Then there is an explicit $r$-parameter
deformation of the pair $(\PP_\Delta, \partial\PP_\Delta)$, given by
trinomial equations in Cox coordinates, whose induced deformation of the
generic anticanonical $X$ has generic fibre smooth in a neighbourhood of
the $\ell(F^\circ)$ points of type $C(F)$.  In particular, if $F$ is the
\emph{unique} non-smooth two-dimensional face of $\Delta$, then $X$ is
smoothable.
\end{theorem}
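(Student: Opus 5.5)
The construction runs through the machinery of \S\ref{sec:datum}. Since $\ell(F^\circ)\ge 2$, the dual edge $F^\circ$ has an interior lattice point $v^\ast$. For the given decomposition $F=Q_0+\dots+Q_r$ into unit segments and standard triangles, Lemma~\ref{lem:datum} then provides a $\partial$-deformation datum for $(\ZZ^5,\tau)$. By \cite[Thm.~6.1]{Petracci18} this datum yields the face family $(\mathcal X,\mathcal D)\to T\subset\mathbb A^r$: an $r$-parameter flat deformation of the pair $(\PP_\Delta,\partial\PP_\Delta)$, cut out by trinomials in Cox coordinates of the toric variety of $\tilde\tau$. This gives the existence and explicitness part of the statement.

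Next I would produce the induced deformation of $X$. The anticanonical polarization of $\PP_\Delta$ is the degree-one part of the graded ring of $\tau$, and Petracci's construction deforms the polarized pair together with its graded ring. The sections of degree one therefore extend to the total space, and a generic section cuts out a family $\mathcal Y\to T$ whose central fibre is a generic $X$. Flatness follows because $X$ is a Cartier divisor, not containing any fibre component, in a flat family, and we may shrink $T$ as needed. Near each of the $\ell(F^\circ)$ points $p\in X\cap V(\sigma_F)$, Proposition~\ref{prop:planting} identifies the germ of $X$ with $C(F)$ through the chart $U_{\sigma_F}\cong C(F)\times\CC^\ast$. By Lemma~\ref{lem:ident} the face family restricted to this chart is the Altmann--Cayley family of $F=\sum Q_i$ times a trivial $\CC^\ast$. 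Since the defining section of $X$ is transverse to the $\CC^\ast$-direction at $p$ (this is the implicit-function argument of Proposition~\ref{prop:planting}), its extension to the family still solves for the $\CC^\ast$-coordinate. The induced deformation of the germ $(X,p)$ is thus the Altmann family itself. Lemma~\ref{lem:cascade} then shows that its general fibre is smooth near $p$, since all summands are unit segments or standard triangles.

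For the final clause, suppose $F$ is the unique non-smooth two-dimensional face. By Proposition~\ref{prop:planting}, $\Sing X$ then consists only of the $\ell(F^\circ)$ points over $F$. Smoothness is an open condition in a proper flat family, so the generic fibre is smooth away from small neighbourhoods of these points for $t$ near $0$. Combined with the previous step, the generic fibre is smooth everywhere, and a general arc through $0\in T$ gives a smoothing of $X$.

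I expect the main obstacle to be the middle step. There one must justify that the generic anticanonical section extends over the face family and that its restriction to $U_{\sigma_F}$ really induces the full Altmann deformation of $C(F)$, rather than a degenerate pullback of it. The point needing care is that the section's equation might involve the deformed characters in a way that couples the $\CC^\ast$-factor to the parameters $t_j$. I would first try to handle this using the choice $\langle v^\ast,\zeta\rangle=0$ from Lemma~\ref{lem:ident}, which makes all deformation equations constant along $\CC^\ast$, and then apply the implicit function theorem in the $\CC^\ast$-coordinate uniformly in $t$.
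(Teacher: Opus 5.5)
Your proposal is correct and follows the paper's proof essentially step for step. The common steps are: the datum at an interior point $v^\ast$ of $F^\circ$ and Petracci's face family; extension of a generic anticanonical section, with flatness by the slicing criterion; the chart identification of Lemma~\ref{lem:ident} with $\langle v^\ast,\zeta\rangle=0$, followed by solving for the $\CC^\ast$-coordinate uniformly in $t$, which is the paper's Lemma~\ref{lem:transverse}, where the transversality $\partial s_0/\partial z(p)\neq0$ comes from the $\ell(F^\circ)$ zeros on $V(\sigma_F)\cong\PP^1$ being simple; and finally Lemma~\ref{lem:cascade} together with openness of smoothness for the unique-face case.

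The only real variation is how you lift sections. You use the degree-one part of Petracci's graded deformation of the cone ring, which is legitimate since $w=(v^\ast,0)$ has height zero. The paper's Lemma~\ref{lem:sections} instead uses $\mathcal D$ as a relative Cartier divisor with $\omega_{\mathcal X/T}(\mathcal D)\cong\mathcal O$, toric Kodaira vanishing and Grauert's base change, which additionally shows the nearby fibres are Calabi--Yau.
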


We first record that the hypersurface follows the ambient family.

\begin{lemma}[the anticanonical family]\label{lem:sections}
Let $(\mathcal{X}, \mathcal{D}) \to (T,0)$, $T \subseteq \mathbb{A}^r$ a
neighbourhood of the origin, be the face family of
Lemma~\textup{\ref{lem:datum}}, with central fibre
$(\PP_\Delta, \partial\PP_\Delta)$.  Then for every section
$s_0 \in H^0(\PP_\Delta, -K)$ there is, after shrinking $T$, a section
$s \in H^0\bigl(\mathcal{X}, \mathcal{O}_{\mathcal{X}}(\mathcal{D})\bigr)$
restricting to $s_0$, and its zero locus $\mathcal{Y} \subset \mathcal{X}$
is flat over $T$ with $\mathcal{Y}_0 = \{s_0 = 0\}$.
\end{lemma}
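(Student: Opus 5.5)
The plan is to lift sections using the explicit graded structure of Petracci's family, rather than a general cohomology-and-base-change argument. Once the lift exists, flatness comes from a local regularity argument.

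\emph{Step 1: the graded-module picture.} By \cite[Thm.~6.1]{Petracci18}, the face family is obtained as $\operatorname{Proj}$ of a graded flat $\CC[t_1,\dots,t_r]$-algebra $\mathcal{S}$. This algebra is cut out from the semigroup ring of $\tilde\tau$ by the trinomials $\chi^{m_{j,1}}-\chi^{m_{j,0}}-t_j$. Its degree-$k$ pieces are free of finite rank over the base, and their fibres at $0$ are the degree-$k$ pieces $\bigoplus_{m\in k\Delta^\circ\cap M}\CC\chi^m$ of $\CC[\tau^\vee\cap(M\oplus\ZZ)]$. The sheaf $\mathcal{O}_{\mathcal X}(\mathcal D)$ is the twist by the grading $(0,1)$. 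Its central restriction is $\mathcal{O}(\partial\PP_\Delta)=\mathcal{O}(-K)$, whose sections are the degree-one piece spanned by $\chi^{(u,1)}$ for $u\in\Delta^\circ$.

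\emph{Step 2: lifting.} Flatness of $\mathcal{S}$ implies that the degree-one piece $\mathcal{S}_1$ is a free module, so every element of $\mathcal{S}_1\otimes\CC=H^0(-K)$ lifts to $\mathcal{S}_1$. Concretely, $\chi^{(u,1)}$ lifts to the character of any preimage of $(u,1)$ in $\tilde\tau^\vee$ under the projection killing the auxiliary $e_j^\ast$. Such a preimage exists because $\tilde\tau^\vee$ surjects onto $\tau^\vee$; this surjectivity is the content of Petracci's flatness. The lift $s$ of $s_0=\sum c_u\chi^{(u,1)}$ is then $\sum c_u\tilde\chi_u$.

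An alternative route to the same conclusion uses cohomology. Since $H^1(\PP_\Delta,-K)=0$ by Demazure vanishing for nef line bundles, cohomology and base change makes $\pi_*\mathcal{O}(\mathcal D)$ locally free with fibre $H^0(-K)$ after shrinking $T$. This requires knowing that $\mathcal{O}(\mathcal D)$ is a flat family of line bundles, that is, that $\mathcal D$ is relatively Cartier. I would check this on the charts of $\tilde\tau$, where $\mathcal D$ is the divisor of the degree-one character.

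\emph{Step 3: flatness of $\mathcal Y$.} Here $\mathcal Y$ is locally cut out by one equation $s$ in the flat family $\mathcal X\to T$, and its restriction $s_0$ to the central fibre is a nonzerodivisor on each local ring of $\PP_\Delta$, because $\PP_\Delta$ is integral and $s_0\neq0$. The local criterion of flatness then gives that $\mathcal{O}_{\mathcal X}/(s)$ is flat over $T$ at points of the central fibre, with fibre $\{s_0=0\}$. Since $\mathcal X\to T$ is proper, the flat locus is open and contains $\mathcal X_0$, so after shrinking $T$ the whole of $\mathcal Y$ is flat.

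\emph{Main obstacle.} The delicate point is the compatibility between $\mathcal{O}_{\mathcal X}(\mathcal D)$ and the grading. This means showing that $\mathcal D$ is Cartier relative to $T$ (or is the Proj-twist $\mathcal{O}(1)$) and that its fibre at $0$ really is $\mathcal{O}(\partial\PP_\Delta)$. I would handle this by working on the projective toric variety attached to $\tilde\tau$, where $\mathcal D$ is the restriction of a torus-invariant Cartier divisor transverse to the family, and reading off the central fibre via Lemma~\ref{lem:datum}.
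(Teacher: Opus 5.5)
There is a gap, and it sits where you placed your ``main obstacle''. Both of your lifting routes need $\mathcal O_{\mathcal X}(\mathcal D)$ to be an invertible sheaf on $\mathcal X$, hence flat over $T$, with restriction $\mathcal O_{\PP_\Delta}(\partial\PP_\Delta)=\mathcal O(-K)$ to $\mathcal X_0$. You leave this to a chart computation, and that computation rests on a model of Petracci's family that is not right as stated.

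In Step~1 the base parameters must have degree zero for $\operatorname{Proj}$ to live over $T$. Homogeneity of $\chi^{m_{j,1}}-\chi^{m_{j,0}}-t_j$ would then force $m_{j,0},m_{j,1}$ to have height zero. But $\tilde\tau^\vee$ contains no non-zero character of height zero. A character $(m,0,c)$ must be $\ge0$ on every $(V,1,0)$, which forces $m=0$ because $0$ is interior to $\Delta$. Then $c=0$, because the split generators give $c_j\ge0$ and $-\sum_j c_j\ge0$. The bare-$t_j$ form is the chart equation $a_j-a_0=t_j$ of Lemma~\ref{lem:ident}, obtained after dividing by a unit. Globally the relations are homogeneous trinomials $\chi^{m_{j,1}}-\chi^{m_{j,0}}=t_j\chi^{m_j'}$ with all three characters of degree one. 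Your chart check also presupposes that $\mathcal D$ is the zero locus of a particular degree-one lift, so that $\mathcal O(\mathcal D)\cong\mathcal O(1)$. That is a further claim about \cite[Thm.~6.1]{Petracci18} which you do not verify.

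The missing step needs no model of the family at all. Petracci's theorem gives that $\mathcal D$ is flat over $T$ with $\mathcal D_0=\partial\PP_\Delta$, and $\partial\PP_\Delta$ is Cartier because $\PP_\Delta$ is Gorenstein.
\begin{itemize}
\item Flatness of $\mathcal O_{\mathcal D}$ gives $I_{\mathcal D}\otimes k(0)\cong I_{\mathcal D_0}$.
\item So at a point of $\mathcal D_0$, any lift $f\in I_{\mathcal D}$ of a local equation of $\mathcal D_0$ generates $I_{\mathcal D}$ by Nakayama.
\item Such an $f$ is a nonzerodivisor by the same local criterion you use in Step~3.
\item Properness then lets you shrink $T$ so that $\mathcal D$ is a relative effective Cartier divisor with $\mathcal O(\mathcal D)|_{\mathcal X_0}=\mathcal O(-K)$.
\end{itemize}
This is the paper's argument. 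With it, your cohomological alternative becomes the paper's route: the paper uses $h^i(\PP_\Delta,-K)=0$ for all $i>0$, constancy of $\chi$, and Grauert, where you use $H^1=0$ and cohomology and base change. Either works. The paper also trivializes $\omega_{\mathcal X/T}(\mathcal D)$ so that the fibres are Calabi--Yau; that is used later but is not part of this statement.

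Your Step~3 is correct and genuinely different from the paper's. The paper applies the slicing criterion \cite[Thm.~22.6]{Matsumura} over every $t$. It must therefore show that each $s_t$ is a nonzerodivisor on $\mathcal X_t$, which costs reducedness of the nearby fibres and a semicontinuity argument for $\dim\mathcal Y_t$. You check the criterion only over $0$, where integrality of $\PP_\Delta$ makes it immediate. You then spread flatness using openness of the flat locus of $\mathcal Y\to T$ (it should contain $\mathcal Y_0$, not $\mathcal X_0$) together with properness. This is shorter and avoids the fibre analysis.

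Your Step~3 does use $s_0\ne0$. For $s_0=0$ you must take $s=0$: a lift such as $t_1\sigma$ has zero locus containing the whole preimage of $\{t_1=0\}$, so it is not flat. Your character lift does this automatically, the cohomological lift does not, and the paper treats this case separately.
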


\begin{proof}
By \cite[Thm.~6.1]{Petracci18} the boundary deforms along the family, so
$\mathcal{D}\subset\mathcal{X}$ is flat over $T$ with central fibre
$\partial\PP_\Delta$; it is a relative effective Cartier divisor after
shrinking $T$, since $\mathcal{D}_0$ is Cartier ($\PP_\Delta$ being
Gorenstein), so by flatness and Nakayama the ideal sheaf of $\mathcal{D}$
is invertible along $\mathcal{D}_0$, with local generators restricting to
nonzerodivisors on nearby fibres.

The nearby $\mathcal{D}_t$ are again anticanonical.  Gorenstein-ness is
open in a proper flat family, so after shrinking $T$ the sheaf
$\omega_{\mathcal{X}/T}$ is a line bundle, and
$\mathcal{M} = \omega_{\mathcal{X}/T}(\mathcal{D})$ has
$\mathcal{M}_0 \cong \mathcal{O}$ with $h^1(\PP_\Delta,\mathcal{O}) = 0$;
by cohomology and base change the trivializing section extends, and by
properness its zero locus, missing $\mathcal{X}_0$, misses the nearby
fibres.  Hence
$\omega_{\mathcal{X}_t}(\mathcal{D}_t) \cong \mathcal{O}_{\mathcal{X}_t}$,
and $\mathcal{L} = \mathcal{O}_{\mathcal{X}}(\mathcal{D})$ is a relative
anticanonical bundle.

Since $-K_{\PP_\Delta}$ is ample on the Gorenstein Fano toric fourfold
$\PP_\Delta$, toric Kodaira vanishing gives $h^i(\PP_\Delta, -K) = 0$ for
$i>0$, hence the same on nearby fibres by semicontinuity; as
$\chi(\mathcal{X}_t, \mathcal{L}_t)$ is constant in a flat family, so is
$h^0(\mathcal{X}_t, \mathcal{L}_t)$, and Grauert's criterion makes
$\pi_\ast\mathcal{L}$ locally free with formation commuting with base
change.  If $s_0=0$, choose the zero extension; then
$\mathcal Y=\mathcal X$, which is flat over $T$.  Suppose henceforth
that $s_0\ne0$, and extend it to a section $s$ of $\mathcal L$.
Its zero locus $\mathcal Y$ is flat over $T$ by the slicing criterion
\cite[Thm.~22.6]{Matsumura}: it suffices that each $s_t$ be a
nonzerodivisor on $\mathcal X_t$.  After shrinking $T$ the fibres
$\mathcal X_t$ are reduced (geometric reducedness is open in a proper
flat family), and $s_t$ vanishes on no irreducible component: a
component in its zero locus would give $\dim \mathcal Y_t = 4$, while
the locus $\{t : \dim \mathcal Y_t \ge 4\}$ is closed
($\mathcal Y \to T$ is proper) and misses $0$, since
$\dim \mathcal Y_0 = 3$.  Each
$\mathcal{Y}_t$ is an anticanonical Cartier divisor in the Gorenstein
$\mathcal{X}_t$, so adjunction gives
$\omega_{\mathcal{Y}_t} \cong \mathcal{O}$, and
$h^1(\mathcal{Y}_t,\mathcal{O}) = 0$ near $t=0$ by semicontinuity
(the central vanishing follows from the anticanonical divisor
sequence and toric vanishing).  When $s_0$ is generic, as in the
applications below, $\mathcal Y_0$ is normal with canonical
singularities; after shrinking, the fibres are Calabi--Yau threefolds.
\end{proof}

The step that actually smooths the \emph{hypersurface}, as opposed to the
ambient germ, is the following.  It is where the genericity of $s_0$ is
used.

\begin{lemma}[the hypersurface follows the ambient smoothing]
\label{lem:transverse}
Let $F$ be of type \textup{(S)} with decomposition
$F = Q_0 + \dots + Q_r$ into unit segments and standard triangles and
$\ell(F^\circ)\ge2$, let $(\mathcal X,\mathcal D)\to T$ be the
corresponding face family, and let $\mathcal Y \to T$ be the anticanonical
family of Lemma~\textup{\ref{lem:sections}} through a generic
$s_0 \in H^0(\PP_\Delta,-K)$.  Let $p$ be one of the $\ell(F^\circ)$
points of $X = \mathcal Y_0$ lying over $F$.  Then there are a
neighbourhood $U$ of $p$ in $\mathcal X$ and a non-empty Zariski-open
$T^\circ \subseteq T$ such that $\mathcal Y_t \cap U$ is smooth for every
$t \in T^\circ$.
\end{lemma}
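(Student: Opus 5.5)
Work in the chart $U_{\sigma_F}\cong C(F)\times\CC^\ast$ and use Lemma~\ref{lem:ident}. Over this chart the face family is, after the choice of complement $\zeta$ with $\langle v^\ast,\zeta\rangle=0$, the product of Altmann's Cayley family $\mathcal C\to\mathbb A^r$ of $F=\sum Q_i$ with $\CC^\ast$. By Lemma~\ref{lem:cascade}, for $t$ in a non-empty Zariski-open subset of $\mathbb A^r$ the fibre $\mathcal C_t$ is smooth, because every $Q_i$ is a unit segment or a standard triangle. So near $p$ the total space $\mathcal X$ is locally $\mathcal C\times\CC^\ast$, and the ambient fibres $\mathcal X_t$ are smooth fourfolds near $p$ for general $t$. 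It remains to show that $\mathcal Y_t=\{s_t=0\}$ is smooth near $p$, i.e.\ that $s_t$ has no critical point on $\mathcal X_t\cap U$.

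\emph{Step 1: the central transversality.} On $U_{\sigma_F}$, write $s_0$ in the coordinates of $C(F)\times\CC^\ast$. Restricted to the orbit $V(\sigma_F)\cong\CC^\ast$, the section $s_0$ becomes a Laurent polynomial $g(z)$ in the $\CC^\ast$-coordinate. Its Newton polytope is the dual edge $F^\circ$, of lattice length $\ell(F^\circ)$. The points $p$ are its $\ell(F^\circ)$ roots, and these are simple for generic $s_0$ (Proposition~\ref{prop:planting}). Simplicity means $\partial_z s_0(p)\neq0$. I will phrase the implicit-function argument of Proposition~\ref{prop:planting} this way: near $p$ the equation $s_0=0$ solves for $z=\phi(y)$, $y\in C(F)$, and this identifies $\mathcal Y_0\cap U$ with a neighbourhood of the vertex of $C(F)$.

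\emph{Step 2: extend to the family.} By Lemma~\ref{lem:sections}, $s$ extends $s_0$ to a section on $\mathcal X$. Choosing a local trivialization of $\mathcal L$ near $p$, $s$ is a holomorphic function on $\mathcal C\times\CC^\ast$ near $(0,p)$. Since $\partial_z s(p)\neq0$ at the central point, the same holds on a neighbourhood $U$. On $\mathcal X\cap U$, the implicit function theorem (applied to the pullback of $s$ to the smooth ambient space in which $\mathcal C$ is embedded by the trinomials, times $\CC^\ast_z$) then gives $\{s=0\}\cap U\cong\{(c,\phi(c))\}$ with $\phi$ holomorphic on a neighbourhood of the vertex in $\mathcal C$. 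This is compatible with the projection to $T$. Hence $\mathcal Y\cap U\to T$ is isomorphic over $T$ to a neighbourhood of the vertex in $\mathcal C\to T$. So $\mathcal Y_t\cap U\cong\mathcal C_t\cap U'$, which is smooth for $t\in T^\circ$, the Zariski-open set from Lemma~\ref{lem:cascade} intersected with $T$.

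\emph{Main obstacle.} The delicate point is the uniformity of $U$. The fibres $\mathcal C_t$ acquire no singularities escaping from the vertex neighbourhood; still, Lemma~\ref{lem:cascade} asserts smoothness of the whole general fibre of the affine Cayley family, so smoothness on $U'\cap\mathcal C_t$ is automatic. The real issue is making sure that $\partial_z s\neq 0$ holds on a neighbourhood of $p$ in the total space, not only on the central fibre. This follows by continuity, since it is an open condition at the point $(p,0)$. The neighbourhood must then be chosen small enough that it lies in the chart. Finally, $T^\circ$ must be Zariski-open rather than merely analytic-open near $0$. I would get this from algebraicity: the singular locus of $\mathcal Y\to T$ is closed, and its image near $p$ is contained in the discriminant of the Cayley family, a proper algebraic subset, so its complement is the required $T^\circ$.
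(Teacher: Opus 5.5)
Your proposal is correct and follows the paper's proof: simplicity of the $\ell(F^\circ)$ roots of $s_0$ on the orbit gives $\partial_z s_0(p)\neq0$; openness extends this to a neighbourhood of $p$ in the total space; the resulting graph description identifies $\mathcal Y_t\cap U$ with an open piece of the Cayley fibre $\mathcal C_t$ of Lemma~\ref{lem:ident}; and Lemma~\ref{lem:cascade} supplies the Zariski-open $T^\circ$. The only difference is that the paper uses Weierstrass division in $z$ with parameters over the complex space, while you use the implicit function theorem on a smooth ambient embedding, which identifies $\mathcal Y\cap U$ over $T$ with a neighbourhood of the vertex in $\mathcal C$ in one step. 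Strictly, you apply it to a local holomorphic extension of $s$ rather than a pullback; this is harmless, since that extension's $z$-derivative agrees with $\partial_z s$ on $\mathcal C\times\CC^\ast$, $z$ being a product direction.
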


\begin{proof}
By Lemma~\ref{lem:ident} there are a neighbourhood $U$ of $p$ in
$\mathcal X$ and an isomorphism over $T$
\[
U \;\cong\; \mathcal{C} \times \CC^\ast ,
\]
where $\mathcal C \to T$ is Altmann's Cayley family of the decomposition
$F = \sum_i Q_i$ and the $\CC^\ast$ factor is the direction of the toric
curve $V(\sigma_F)$; the central fibre is $C(F)\times\CC^\ast$, and $p$
corresponds to $(o, z_0)$ with $o$ the cone point of $C(F)$.  Write $z$
for the coordinate on the $\CC^\ast$ factor.

Because $s_0$ is generic, Proposition~\ref{prop:planting} gives
$\ell(F^\circ)$ \emph{distinct} zeros of $s_0$ on
$V(\sigma_F) \cong \PP^1$; a section of a degree-$\ell(F^\circ)$ line
bundle on $\PP^1$ with $\ell(F^\circ)$ distinct zeros has only simple
ones, so (trivializing $\mathcal L$ near $p$ and regarding $s_0$ as a
holomorphic function)
$\partial s_0/\partial z \,(p) \neq 0$.  By Weierstrass division in the
variable $z$ (valid with parameters over the complex space $C(F)$),
$s_0 = (\text{unit})\cdot(z - \varphi_0)$ near $p$ for a holomorphic
$\varphi_0 : W_0 \to \CC^\ast$ on a neighbourhood $W_0$ of $o$ in
$C(F)$, so $\{s_0 = 0\}$ is there the graph of $\varphi_0$; this is
how Proposition~\ref{prop:planting} identifies the germ of $X$ at $p$
with $C(F)$.

Non-vanishing of $\partial s_t/\partial z$ is an open condition on
$U \times T$, and $s_t$ depends holomorphically on $t$
(Lemma~\ref{lem:sections}), so after shrinking $U$ and $T$ we have
$\partial s_t/\partial z \neq 0$ throughout $U$ for every $t$.
Weierstrass division applies fibrewise: near each of its points,
$\mathcal Y_t \cap U$ is the graph of a holomorphic function on an open
subset of $\mathcal C_t$; in particular, locally on $U$,
\[
\mathcal Y_t \cap U \;\cong\; \mathcal{C}_t \cap (\text{an open set}) .
\]
So $\mathcal Y_t \cap U$ is smooth as soon as $\mathcal C_t$ is.  Every
summand $Q_i$ being a unit segment or a standard triangle, all summand
cones $C(Q_i)$ are smooth, so by Lemma~\ref{lem:cascade} the general
fibre $\mathcal C_t$ is smooth; this holds on a non-empty Zariski-open
$T^\circ \subseteq T$.
\end{proof}

\begin{proof}[Proof of Theorem~\textup{\ref{thm:A2}} (assembly)]
Let $(\mathcal{X}, \mathcal{D}) \to T$ be the face family and
$\mathcal{Y} \to T$ the anticanonical family of
Lemma~\ref{lem:sections} through a generic $s_0$, so
$X = \mathcal{Y}_0$.

\emph{The points over $F$ are smoothed.}  This is
Lemma~\ref{lem:transverse}, applied at each of the $\ell(F^\circ)$ points
of $X$ over $F$: intersecting the finitely many non-empty Zariski-open
subsets $T^\circ$ so obtained, $\mathcal{Y}_t$ is smooth near all of them
for generic small $t \in T$.

\emph{The unique-face case.}  Suppose $F$ is the unique non-smooth
two-dimensional face of $\Delta$.  Then $X$ is smooth away from the
$\ell(F^\circ)$ points over $F$ (Proposition~\ref{prop:planting}), so
$\mathcal{Y}_0$ is smooth outside the former $F$-points; smoothness is
open on the total space of the proper flat family $\mathcal{Y}\to T$, so
after shrinking, $\mathcal{Y}_t$ is smooth away from a neighbourhood of
those points for all small $t$, and smooth near them for generic $t$ by
the previous step.  Restricting $\mathcal{Y}$ to a generic line through
$0 \in T$ gives a flat proper family over a disc with special fibre $X$
and smooth Calabi--Yau general fibre (Lemma~\ref{lem:sections}): $X$ is
smoothable.
\end{proof}

\begin{remark}[first-order persistence at the other faces]
\label{rem:persist}
None of the results below use this remark; we record it because it
explains why the face families of distinct faces do not interfere at
first order, which is the evidence behind Question~\ref{q:multi}.  The
face family does not disturb the other germs at first order.  By
\cite[Rem.~3.7]{Petracci18} (stated there for the affine construction
of \cite[Thm.~3.5]{Petracci18}; the projective family carries the same
parameter weights, as its trinomials do), the Kodaira--Spencer image of
the family of
Lemma~\ref{lem:datum} lies in the weight-$w$ component of $T^1$, with
$w = (v^\ast,0)$.  For a face $F' \neq F$, the space $T^1_{C(F')}$ is
concentrated in the degree $-u_{F'}$, the character taking the value
$-1$ on all rays $(V',1)$ of $\sigma_{F'}$ \cite{Altmann95}; but $F'$
has a vertex $V'$ not on $F$, where
$\langle w, (V',1)\rangle = \langle v^\ast, V'\rangle \ge 0 \neq -1$
(it is an integer $\ge -1$, and $= -1$ exactly on $F$ by the proof of
Lemma~\ref{lem:datum}).  So the induced first-order deformation of the
germ over $F'$ vanishes; likewise no positive integer multiple of $w$
equals $-u_{F'}$, which by Rim's equivariant structure on the miniversal
deformation \cite{Rim} forces the higher-order Taylor coefficients of an
equivariant classifying map into vanishing weight spaces as well.
Upgrading this to constancy of the germs requires identifying the
induced deformation of the affine chart along $V(\sigma_{F'})$ with an
equivariant product family, as in Lemma~\ref{lem:ident}; this is
routine for
faces $F'$ disjoint from $F$, where the chart of the extended cone over
$\sigma_{F'}$ is untouched by the construction, but we do not pursue it
here.
\end{remark}

\begin{theorem}[restatement of Theorem~\textup{\ref{thm:Amulti}}]\label{thm:A3}
Let $\Delta$ be reflexive with unit edges and suppose that every
non-smooth two-dimensional face $F_1,\dots,F_s$ of $\Delta$ is of type
\textup{(S)} (a Minkowski sum of unit segments and standard
triangles), with $\ell(F_i^\circ) \ge 2$.  Then:
\begin{enumerate}
\item for every singular point $p$ of $X$ there is an explicit
deformation of $X$ whose generic fibre is smooth in a neighbourhood of
$p$; consequently the locus $D_p \subset (B,0)$ of points of the
semiuniversal deformation of $X$ whose fibres remain singular near $p$
is a proper closed analytic subgerm;
\item if the base germ $(B,0)$ of the semiuniversal deformation of $X$
is irreducible (for instance smooth), then $X$ is smoothable;
\item if every $F_i$ is a unit parallelogram, then $X$ is smoothable
unconditionally.
\end{enumerate}
\end{theorem}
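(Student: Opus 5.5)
The three parts use three different mechanisms, so I treat them separately.

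For (i), let $p$ be a singular point of $X$. By Proposition~\ref{prop:planting}, $p$ lies over some face $F_i$. I apply Lemma~\ref{lem:datum} to $F_i$ with its type-(S) decomposition and an interior lattice point $v^\ast$ of $F_i^\circ$, which exists because $\ell(F_i^\circ)\ge2$. This gives a face family, and Lemma~\ref{lem:sections} gives the induced anticanonical family $\mathcal Y\to T$ through the generic $s_0$. Lemma~\ref{lem:transverse} then supplies a neighbourhood $U$ of $p$ and a Zariski-open $T^\circ$ on which $\mathcal Y_t\cap U$ is smooth.

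To pass to the semiuniversal base, I use versality. Restricting $\mathcal Y$ to a generic line through $0$ gives a classifying map $\phi\colon(\Delta,0)\to(B,0)$. Its image is not contained in $D_p$, since fibres over $t\neq0$ are smooth near $p$.

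It remains to see that $D_p$ is a closed analytic subgerm. Let $\mathcal Z\to B$ be the universal family. Its relative singular locus $\Sigma$ is closed analytic. By properness of $\mathcal Z\to B$, $\Sigma$ is proper over $B$, and near the isolated point $p$ it meets only a small neighbourhood of $p$. So the image of the component of $\Sigma$ through $p$ is closed analytic, by Remmert's proper mapping theorem applied after shrinking to a neighbourhood of $p$ in which $\Sigma$ is finite over $B$. That image is $D_p$, and it is proper because $\phi(\Delta)\not\subset D_p$.

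For (ii), there are finitely many singular points $p_1,\dots,p_N$ of $X$ (Proposition~\ref{prop:planting}). Each $D_{p_k}$ is a proper closed subgerm of the irreducible germ $(B,0)$, so their union $D$ is a proper closed subgerm. Choose an arc $\gamma\colon(\Delta,0)\to(B,0)$ with $\gamma(\Delta\setminus0)\cap D=\emptyset$; such a curve exists, for instance a generic curve through $0$ on a representative, by the curve selection lemma applied to $B\setminus D$. Along $\gamma$ the fibres are smooth near each $p_k$. Away from neighbourhoods of the $p_k$, the fibres are smooth by openness of smoothness in proper flat families. So $\gamma^\ast\mathcal Z$ is a smoothing.

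For (iii), every singularity is a node, and I use Friedman's criterion \cite{Friedman86,RollenskeThomas}. It suffices to take a small resolution $\widehat X$ with exceptional curves $\gamma_j$ and show there is a relation $\sum\lambda_j[\gamma_j]=0$ in $H_2(\widehat X)$ with every $\lambda_j\neq0$. I take the MPCP resolution, which is small over nodes. The $\ell(F_i^\circ)$ exceptional curves over $F_i$ are all fibres over points of the toric curve $V(\sigma_{F_i})$ of the same toric $\PP^1$-bundle, namely the diagonal of the square subdividing $C(F_i)$. Hence they are numerically equivalent in the ambient resolved toric variety, and I need them homologous in $\widehat X$.

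This is the step I expect to be the main obstacle. My first approach is to exhibit them as fibres of a ruled surface $E\cap\widehat X\to V(\sigma_{F_i})\cap X$ over a connected curve. If that fails, I would argue via Lefschetz: $H_2(\widehat X)\to H_2(\widehat{\PP}_\Delta)$ is injective for an ample-type section. Granting this, for each face choose integers $\lambda_{i,1},\dots,\lambda_{i,\ell}$, all nonzero and summing to zero, which is possible since $\ell\ge2$, for example $1,-1,0$ adjusted to $1,1,-2$ when $\ell=3$. Adding the relations over all faces gives an all-nonzero relation, and Friedman's theorem yields a smoothing.
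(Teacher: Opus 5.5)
Your parts (i) and (ii) follow the paper's argument: the relative singular locus is finite over $B$ after shrinking, $D_p$ is its image near $p$, and properness of $D_p$ is witnessed by the classifying map of the face family. Then one takes an arc in the irreducible germ avoiding $D=\bigcup_q D_q$. One point of care: the curve selection lemma applied to $B\setminus D$ only gives a real-analytic arc. The holomorphic disc has to come, as you hint, from cutting the normalized germ by generic hyperplanes down to an irreducible curve germ not contained in $D$.

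\emph{The gap is in (iii).} It sits exactly at the step you flagged and then granted: that the $\ell(F_i^\circ)$ exceptional curves $\gamma_j$ over $F_i$ all have the same class in $H_2(\widehat X;\mathbb Q)$. Neither of your routes establishes this.

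Route (a) fails. By Proposition~\ref{prop:planting}, $X\cap V(\sigma_{F_i})$ is a set of $\ell(F_i^\circ)$ points, not a connected curve. Since $\widehat X$ is the pullback of $X$, its intersection with the ruled surface $V(\tau)\to V(\sigma_{F_i})$ (with $\tau$ the cone over the diagonal) is just the disjoint union of the $\gamma_j$. The ruling that makes them equivalent lives only in the ambient space. Inside the natural surface $D_d\cap\widehat X$, with $d$ an endpoint of the diagonal, the $\gamma_j$ are pairwise disjoint and each has self-intersection $-1$ (locally this surface is a toric divisor $\mathrm{Tot}\,\mathcal O_{\PP^1}(-1)$ of the resolved conifold). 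So they are not homologous there.

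Route (b) fails as justified. $\widehat X$ is not ample on the MPCP resolution $\widehat\PP_\Delta$: it is pulled back from $\PP_\Delta$, so it has degree $0$ on the $\gamma_j$ themselves. Moreover, injectivity of $H_2(\widehat X;\mathbb Q)\to H_2(\widehat\PP_\Delta;\mathbb Q)$ is false for general Batyrev hypersurfaces. If a $2$-face $F$ has an interior lattice point $q$ and $\ell(F^\circ)\ge2$, then $D_q\cap\widehat X$ splits into $\ell(F^\circ)$ disjoint surfaces, one over each point of $X\cap V(\sigma_F)$. These non-toric divisor classes, which account for the correction term $\sum_F l^*(F)\,l^*(F^\circ)$ in Batyrev's formula for $h^{1,1}$, pair differently with ambient-equivalent curves lying over different points. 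So the nodal hypothesis must be used exactly at this step, and your sketch never uses it.

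\emph{The missing input.} When every $2$-face is a standard triangle or a unit square, no $2$-face has interior lattice points, so the correction term vanishes. Hence $H^2(\widehat X;\mathbb Q)$ is spanned by the restrictions of the toric divisors $D_v$, $v$ a vertex of $\Delta$; the divisors of facet-interior points miss $\widehat X$. Dually, $H_2(\widehat X;\mathbb Q)$ is the space of rational linear relations among the vertices of $\Delta$. This is the Batyrev--Kreuzer presentation the paper invokes \cite{BK10}, and it is why the injectivity you wanted in (b) does hold here, but not by Lefschetz.

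Now $D_v\cdot\gamma_j$ equals the ambient intersection number of $D_v$ with a fibre of $V(\tau)\to V(\sigma_{F_i})$, which is independent of $j$. So every $\gamma_j$ over $F_i$ represents the diagonal relation of $F_i$. With that in place, the rest of your (iii) is the paper's proof: choose nonzero integers summing to zero on each face, add the relations over faces, and apply Friedman's criterion to the projective small MPCP resolution, upgraded to an actual smoothing by Kawamata--Tian unobstructedness.
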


\begin{proof}
(i)  Let $p$ lie over the face $F_i$ and apply Theorem~\ref{thm:A2} to
$F_i$: the resulting family $\mathcal{Y}\to T_i$ has generic fibre
smooth near $p$.  A semiuniversal deformation
$\mathcal{S} \to (B,0)$ of the compact complex space $X$ exists by
Grauert \cite{Grauert74}; choose a representative, proper over $B$.
The relative singular locus $\Sigma = \Sing(\mathcal{S}/B)$ is closed,
meets the central fibre in the finite set $\Sing X$, and so (fibre
dimension being upper semicontinuous and the map proper) is finite
over $B$ after shrinking.  Then $\Sigma$ decomposes as a disjoint union
$\bigsqcup_q \Sigma_q$ over the singular points $q$ of $X$, and each
$D_q := \operatorname{im}(\Sigma_q \to B)$ is a closed analytic subgerm
(finite maps are proper).  The family $\mathcal{Y} \to T_i$ is induced
from $\mathcal{S}$ by a classifying map $\gamma_i : (T_i,0) \to (B,0)$;
for generic $t$ the fibre is smooth near $p$, so
$\gamma_i(t) \notin D_p$: the subgerm $D_p$ is proper.

(ii)  After the shrinking above, a fibre $\mathcal{S}_b$ is smooth if and
only if $b \notin D := \bigcup_q D_q$: away from the $\Sigma_q$ the total
space is smooth over $B$ because the central fibre is (openness of
smoothness).  If $(B,0)$ is irreducible, $D$, a finite union of proper
closed analytic subgerms, is a proper closed analytic subgerm, so its
complement meets every neighbourhood of $0$.  By the curve selection
lemma for analytic germs there is a holomorphic arc
$\varphi : (\mathbb{D},0) \to
(B,0)$ with $\varphi(\mathbb{D}\setminus 0) \not\subset D$
(normalize $B$ and slice by $\dim B - 1$ generic hyperplanes through a
preimage of $0$: the preimage of $D$ has smaller dimension, so the
resulting curve germ is not contained in it); an
irreducible arc not contained in the closed analytic set $D$ meets it
in a discrete set, so after shrinking the disc $\varphi(t) \notin D$
for all $t \neq 0$, and $\varphi^\ast\mathcal{S}$ is a smoothing of $X$.

(iii)  If every $F_i$ is a unit parallelogram, $X$ is a nodal
Calabi--Yau threefold and the MPCP resolution $\widehat X \to X$ is a
projective \emph{small} resolution (each square face has no interior or
edge lattice points, so the maximal triangulation inserts only the
diagonal wall).  The $\ell(F_i^\circ) \ge 2$ nodes over one face lie on
the single toric curve $V(\sigma_{F_i}) \cong \PP^1$, and their
exceptional curves in the MPCP resolution all represent the same class
in $H_2(\widehat X;\mathbb{Q})$: in the Batyrev--Kreuzer presentation of
this group by rational linear relations among the vertices of $\Delta$,
each maps to the diagonal relation of the face $F_i$, the sum of the
endpoints of one diagonal minus the sum of the endpoints of the other,
Batyrev--Kreuzer's $\rho_\theta$ \cite{BK10} (see \S\ref{sec:bk}); choosing integers $a_{i,1},\dots,a_{i,\ell_i}$
(writing $\ell_i = \ell(F_i^\circ)$), all
non-zero, with $\sum_j a_{i,j} = 0$ (possible exactly because
$\ell_i \ge 2$), and summing over the faces produces a relation
$\sum a_k [\gamma_k] = 0$ among \emph{all} exceptional classes with every
coefficient non-zero.  By the smoothing criterion for nodal
Calabi--Yau threefolds with projective small resolution, $X$ is
smoothable.  The criterion is Friedman's first-order result
\cite{Friedman86}, upgraded to genuine
smoothings by the unobstructedness of nodal Calabi--Yau deformations
due to Kawamata and Tian (see \cite{RollenskeThomas}; the criterion is
Namikawa's \cite{Namikawa02}, as used by \cite{BK10}).
\end{proof}

\begin{remark}\label{rem:noiterate}
To iterate the construction for several faces, one would need a toric
presentation of the first deformed fibre and a compatible deformation
datum for each remaining face.  The construction above does not provide
these data, so it does not justify a second application of
Lemma~\ref{lem:datum}.  We also do not construct a single datum
combining arbitrary choices on several faces.  This is the same difficulty that leads
Corti--Hacking--Petracci, one dimension down, to assemble their per-facet
admissible Minkowski data not torically but through the semiuniversal
deformation of a partial resolution with unobstructed deformations
\cite{CHP24}.  Whether Theorem~\ref{thm:A3}(ii) holds unconditionally
(equivalently, whether the irreducibility hypothesis can be removed)
is Question~\ref{q:multi} below; note that by Gross \cite{Gross97}
the deformation space of a Calabi--Yau threefold with canonical
singularities can be singular, so the hypothesis is not automatic.
\end{remark}

\subsection{Singularities after an arbitrary Minkowski decomposition}\label{sec:cascade}

Theorem~\ref{thm:A} invoked the type~(S) hypothesis only at the last step, to
conclude that the local general fibre is smooth.  The deformation datum and its
chart identification are insensitive to the summand types, so the same family
realizes globally the \emph{cascade} of \cite[Lemma~2.4]{paper1}: it moves the
singular points over $F$ to the germs of the two-dimensional non-smooth summands
of a chosen Minkowski decomposition.

\begin{proposition}[global cascade]\label{prop:cascade}
Let $\Delta$ be reflexive with unit edges whose \emph{unique} non-smooth
two-dimensional face $F$ has a
Minkowski decomposition $F = Q_0 + \dots + Q_r$ into lattice polytopes and
$\ell(F^\circ) \ge 2$.  Then there is an explicit $r$-parameter deformation of the
generic anticanonical $X$, induced by a trinomial deformation of the ambient
toric pair,
such that on a generic line through the origin of the base, the fibre over
every sufficiently small $t \neq 0$ is a Calabi--Yau threefold that has exactly
$\ell(F^\circ)$ points of type $C(Q_i)$ for each
two-dimensional non-smooth summand $Q_i$, and no other singular points.
In particular, if $F$ is deformable-but-non-smoothable (type
\textup{(D)}), take the construction for a maximal decomposition
$F=R_0+\dots+R_s$.  Then $X$ deforms to a Calabi--Yau threefold in which
each of the $\ell(F^\circ)$ points of type $C(F)$ is replaced by a reduced-rigid
point $C(R_j)$ for each two-dimensional non-smooth summand $R_j$; this
deformed threefold is again non-smoothable.
\end{proposition}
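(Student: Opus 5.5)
The plan is to rerun the proof of Theorem~\ref{thm:A2} with the type-(S) hypothesis removed, and to track at each point which singularities survive. Lemma~\ref{lem:datum} holds for any Minkowski decomposition into lattice polytopes, because it uses only $\ell(F^\circ)\ge2$ and an interior point $v^\ast$. Lemma~\ref{lem:ident} likewise does not depend on the types of the summands; Step~2 points this out explicitly. Together they give the face family $(\mathcal X,\mathcal D)\to T\subseteq\mathbb A^r$. Near each of the $\ell(F^\circ)$ points over $F$, this family is Altmann's Cayley family $\mathcal C\to T$ of $F=\sum Q_i$, times $\CC^\ast$. Lemma~\ref{lem:sections} then extends a generic $s_0$ to a flat anticanonical family $\mathcal Y\to T$ whose nearby fibres are Calabi--Yau.

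Next, localize at a point $p$ over $F$. The first half of the proof of Lemma~\ref{lem:transverse} does not use type (S): the zeros of $s_0$ on $V(\sigma_F)$ are simple, so $\partial s_t/\partial z\ne0$ on a neighbourhood $U$, and Weierstrass division identifies $\mathcal Y_t\cap U$ locally with open subsets of $\mathcal C_t$. We then invoke Lemma~\ref{lem:cascade} in place of the smoothness conclusion. For $t$ in a nonempty Zariski-open set, $\mathcal C_t$ has exactly one singular point of type $C(Q_i)$ for each two-dimensional non-smooth summand, and no others. Away from the points over $F$, $X$ is smooth by Proposition~\ref{prop:planting}, since $F$ is the unique non-smooth face. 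By openness of smoothness in the proper family, the fibres $\mathcal Y_t$ are smooth outside $U_1\cup\dots\cup U_{\ell}$ for small $t$.

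The main obstacle is the counting inside each $U_j$. The graph identification is only local, so I must check two things: that each singular point of $\mathcal C_t$ near the cone point produces exactly one singular point of $\mathcal Y_t\cap U_j$, and that these points stay inside $U_j$. The approach is to shrink $U_j$ to $W\times D$, with $W$ a small neighbourhood of the vertex in $\mathcal C$ and $D$ a disc around $z_0$. Because $\partial s_t/\partial z\ne0$, Weierstrass preparation makes $\mathcal Y_t\cap(W_t\times D)$ a global graph over $W_t$ for small $t$, and hence isomorphic to $W_t$. The singular points of $\mathcal C_t$ converge to the vertex as $t\to0$, because $\Sing(\mathcal C/T)$ is finite over $T$ with fibre $\{o\}$ at $0$, so they lie in $W_t$ for small $t$. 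Restricting to a generic line through $0$ moves us into the Zariski-open set for all sufficiently small $t\neq0$. This gives exactly $\ell(F^\circ)$ points of type $C(Q_i)$ for each two-dimensional non-smooth summand $Q_i$.

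For the type-(D) clause, take a maximal decomposition $F=R_0+\dots+R_s$. Each summand $R_j$ is then Minkowski-indecomposable, so it is either a segment or a standard triangle, giving a smooth cone, or of type (R), giving a reduced-rigid cone $C(R_j)$. Some $R_j$ must be two-dimensional, non-smooth and non-(S); otherwise $F$ would be of type (S), contradicting (D). The deformed fibre therefore has a singular point whose germ has reduced miniversal base a point, and in particular no smoothing component. Lemma~\ref{lem:ltg} then shows that this threefold admits no smoothing.
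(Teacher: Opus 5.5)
Your outline follows the paper's proof step by step. Lemma~\ref{lem:datum} and Lemma~\ref{lem:ident} are type-independent, the Weierstrass graph description carries over, fibres are smooth away from the former $F$-points, and Lemma~\ref{lem:ltg} handles the (D) clause. There your remark that some $R_j$ must be of type (R) makes explicit what the paper leaves implicit.

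The gap is in the step you yourself flag as the main obstacle. You assert that $\Sing(\mathcal C/T)$ is finite over $T$ with fibre $\{o\}$ over $0$. But $\mathcal C\to T$ is an affine, non-proper family. Closedness of the relative singular locus $\Sigma$ together with $\Sigma\cap\mathcal C_0=\{o\}$ gives only quasi-finiteness at $o$, hence finiteness of $\Sigma\cap W'\to T$ for some small neighbourhood $W'$ of $o$. That controls the singular points of $\mathcal C_t$ that are already near $o$. It says nothing about singular points that run off to infinity as $t\to0$. For example, the family $y^2=x^2(tx-1)^2$ has closed relative singular locus $\{x=y=0\}\cup\{y=0,\ tx=1\}$ with central fibre the origin, yet a second node at $(1/t,0)$ escapes. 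The word \emph{exactly} in the statement needs every singular point of $\mathcal C_t$ supplied by Lemma~\ref{lem:cascade} to lie in the neighbourhood $W$ over which $\mathcal Y_t$ is a single-sheeted graph. Without that you only get the upper bound ``at most one $C(Q_i)$ per summand''.

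The paper supplies the missing ingredient with a torus action on the Cayley family. The lattice point $n_0=(x,1,\dots,1)$, with $x$ a vertex of $F$, lies in the Cayley cone. Every base parameter is a difference of height characters of $n_0$-weight $1$. So the corresponding one-parameter subgroup carries $\mathcal C_t$ isomorphically onto $\mathcal C_{\lambda t}$, and $\Sing\mathcal C_{\lambda t}=\lambda\cdot\Sing\mathcal C_t$. In particular, the locus of $t$ where the cascade conclusion holds is stable under scaling.

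All characters of the dual cone have non-negative $n_0$-weight, so $\lim_{\lambda\to0}\lambda\cdot p$ exists for every point $p$ of the total space. For $p$ in the invariant closed set $\Sigma$, the limit lies in $\Sigma\cap\mathcal C_0=\{o\}$. Hence, on the line through a generic $t$, the finitely many singular points of $\mathcal C_{\lambda t}$ enter $W$ for all small $|\lambda|$. This is exactly what your counting needs. It is also why the proposition is stated only along a generic line through the origin rather than over a full neighbourhood of $0$ in $T$.
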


\begin{proof}
Lemma~\ref{lem:datum} builds the deformation datum from any Minkowski decomposition
into lattice polytopes with $\ell(F^\circ) \ge 2$: its proof uses only that
$F = \sum_i Q_i$ and $v^\ast \in \operatorname{relint} F^\circ$, never the summand
types.  Lemma~\ref{lem:ident} is likewise independent of those types: a
vertex of a Minkowski sum
decomposes uniquely into vertices of the summands (its normal cone is the
intersection of theirs), so that step, and the unimodular-height identification
with the Altmann--Cayley cone, hold for an arbitrary decomposition.  By the cascade
lemma \cite[Lemma~2.4]{paper1} the general fibre of the Altmann--Cayley family has
exactly one singular point of type $C(Q_i)$ for each two-dimensional non-smooth
summand $Q_i$, and is smooth elsewhere.  Lemma~\ref{lem:sections} carries a generic
anticanonical section along the family, with Calabi--Yau fibres, and the
graph description in the proof of Lemma~\ref{lem:transverse} (which
uses only $\partial s_t/\partial z \neq 0$, never the summand types)
identifies $\mathcal Y_t$, near each former $F$-point, with an open
subset of the Altmann fibre $\mathcal C_t$.  Away from the former
$F$-points the fibres are smooth, since $X$ is (the face $F$ being the
unique non-smooth face) and smoothness is open on the total space of the
proper flat family.  So the singular points of $\mathcal Y_t$ are
\emph{at most} the summand germs, and it remains to check that they
actually occur: that the singular points of $\mathcal C_t$ do not
escape the neighbourhood over which the graph description holds.  The
Altmann--Cayley family is equivariant for the one-parameter subgroup
through the lattice point $n_0 = (x, 1, \dots, 1)$, $x$ a vertex of $F$,
which lies in the Cayley cone (it is the sum of one ray generator from
each summand family); every base parameter is a difference
of the height characters $\chi^{u_i}$, each of weight
$\langle u_i, n_0\rangle = 1$, so the subgroup carries the fibre over
$t$ to the fibre over $\lambda t$ and
$\operatorname{Sing} \mathcal C_{\lambda t}
= \lambda \cdot \operatorname{Sing} \mathcal C_t$.  Because $n_0$ lies
in the cone, every character of the dual cone has non-negative
$n_0$-weight, so $\lim_{\lambda\to 0}\lambda \cdot p$ exists for every
point $p$ of the affine total space; applied to points of the relative
singular locus, which is closed, the limits lie in
$\operatorname{Sing} C(F) = \{o\}$.  Hence along any fixed direction in
the base the singular points of $\mathcal C_{\lambda t}$ converge to
the cone point as $\lambda \to 0$, and on a generic line through the
origin, for all sufficiently small $t \neq 0$ they lie inside the
neighbourhood of $o$ over which
$\mathcal Y_t$ is a graph (a single sheet over a neighbourhood of $o$
uniform in small $t$, by the Weierstrass division of the proof of
Lemma~\ref{lem:transverse} applied at the central point of the total
space), which transports each of them, with its germ
$C(Q_i)$, to a singular point of $\mathcal Y_t$.
When $F$ is of type~(D), take the decomposition to be maximal.  Every
summand is then Minkowski-indecomposable, so every two-dimensional non-smooth
summand gives a reduced-rigid cone.  Thus each $C(F)$ point
becomes a collection of $C(R_j)$ points; since each $C(R_j)$ has no smoothing
component (Theorem~\ref{thm:trich}), the deformed
threefold admits no
smoothing by Lemma~\ref{lem:ltg}.
\end{proof}

\begin{remark}\label{rem:cascade-global}
The smallest instance is the type-\textup{(D)} pentagon $Q_B = T + s$ of \cite{paper1}
($T$ a non-standard determinant-$3$ lattice triangle, whose cone $C(T)$
is the $\frac13(1,1,1)$ point,
and $s$ a unit segment; its maximal decomposition is $T + s$
with the single rigid summand $T$), realized with $\ell(F^\circ) = 3$ as the unique
non-smooth face of the polytope $\Delta_B$ of \cite{paper1}.  The Calabi--Yau
threefold $X_B$, whose three singular points are all deformable yet
non-smoothable, then deforms, through a family induced by explicit ambient
trinomials, to a
Calabi--Yau threefold with three ordinary $\frac13(1,1,1)$ points, itself
non-smoothable.  (That the conditions of Lemma~\ref{lem:datum} hold for this
non-standard summand is a finite
verification.\footnote{\texttt{cascade\_check.py}.})  Thus the family
replaces each original singularity by its rigid summand singularity,
and both the central and general Calabi--Yau threefolds are nonsmoothable.
\end{remark}

\section{Sharpness: a one-nodal Calabi--Yau threefold with no smoothing}
\label{sec:node}

\begin{theorem}[restatement of Theorem~\textup{\ref{thm:B}}]\label{thm:B2}
Let $\Delta$ be a reflexive $4$-polytope with unit edges.  If the singular
locus of its generic anticanonical hypersurface $X\subset\PP_\Delta$
consists of a single ordinary double point, then $X$ admits no smoothing.

The polytope $\Delta_N$ of Theorem~\textup{\ref{thm:B}} provides an explicit
example: it has $7$ vertices and unit
edges; its unique non-smooth two-dimensional face is the unit square
$\square\times\{(1,0)\}$, of type \textup{(S)}, with dual edge of
lattice length $1$; the generic anticanonical $X_N$ is a Calabi--Yau
threefold with exactly one ordinary double point; and $X_N$ admits no
smoothing.  The MPCP resolution
$\widehat X_N$ has $(h^{1,1}, h^{2,1}) = (3, 103)$ and Euler number
$-200$.
\end{theorem}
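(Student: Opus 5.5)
The general statement rests on Friedman's obstruction, read on the MPCP resolution; the example part is a finite computation. Suppose $\operatorname{Sing} X=\{p\}$ is a single ordinary double point. By Proposition~\ref{prop:planting}, $p$ lies over a unique two-dimensional face $F$ with $C(F)$ singular. Since $p$ is a node, $C(F)$ is the ordinary double point, so $F$ is a unit parallelogram and $\ell(F^\circ)=1$. (If $\ell(F^\circ)\ge 2$ there would be at least two nodes.) Every other two-dimensional face is a standard triangle.

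I would take an MPCP resolution $\widehat X\to X$, induced by a maximal projective triangulation of $\partial\Delta$. The square $F$ has no lattice points besides its vertices, so the triangulation only inserts a diagonal of $F$. The resolution is therefore small and projective, with a single exceptional curve $\gamma\cong\PP^1$ over $p$. Projectivity gives an ample class $H$ on $\widehat X$ with $H\cdot\gamma>0$, so $[\gamma]\neq 0$ in $H_2(\widehat X;\mathbb Q)$.

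Friedman's necessary condition \cite{Friedman86} then applies. If a nodal Calabi--Yau threefold with a small resolution admits a smoothing, the exceptional classes satisfy a relation $\sum a_k[\gamma_k]=0$ with every $a_k\neq 0$. Here there is one curve, so this would force $[\gamma]=0$, contradicting the previous paragraph. The condition is genuinely necessary, and I would cite it in that form: a smoothing gives a vanishing cycle whose boundary in the resolved picture is the relation; see \cite{RollenskeThomas} for the statement in this generality.

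The point needing the most care is making sure no smoothing escapes the nodal framework. By Lemma~\ref{lem:ltg}, near $p$ any deformation induces a deformation of the node, so along a one-parameter smoothing the local picture is the standard conifold smoothing. Friedman's argument is local in that sense, so it applies to arbitrary smoothings, not only first-order ones.

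For $\Delta_N$ everything is a finite exact computation, done with the ancillary programs. First verify that $\Delta_N$ is reflexive by computing its facets and checking that each has the form $\langle u,\cdot\rangle=-1$ with $u$ integral. Then check that it has seven vertices and that all edges are unit. Next enumerate the two-dimensional faces and check that every face other than $\square\times\{(1,0)\}$ is a standard triangle, i.e.\ has normalized area $1$. Finally identify the two facets through the square and check that the dual vertices $u_1,u_2$ have primitive difference, so $\ell(F^\circ)=1$. Proposition~\ref{prop:planting} then gives exactly one ordinary double point, and the general statement gives non-smoothability.

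The Hodge numbers come from Batyrev's formulas. $h^{1,1}$ and $h^{2,1}$ are computed from lattice-point counts of $\Delta_N$ and its dual, including the interior points of faces and their correction terms. The Euler number of the MPCP resolution is then $2(h^{1,1}-h^{2,1})=2(3-103)=-200$. As an independent check that $[\gamma]\neq0$, one can confirm that the diagonal relation $\rho_\theta$ of $F$ is a nonzero class in the Batyrev--Kreuzer presentation of $H_2(\widehat X_N;\mathbb Q)$ (\S\ref{sec:bk}).
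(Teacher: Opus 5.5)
Your proposal is correct and takes essentially the same route as the paper. Proposition~\ref{prop:planting} identifies the single node with the cone over a unit square; the MPCP resolution is small and projective, so an ample class gives $[\gamma]\neq 0$; and Friedman's all-nonzero relation for one node would force $[\gamma]=0$, while the $\Delta_N$ part is the same finite computation together with Batyrev's formulas. Your extra remark on higher-order local smoothings is a reasonable gloss on why the necessary condition applies to genuine smoothings, a point the paper simply takes from \cite{Friedman86,RollenskeThomas}.
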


\begin{proof}
That $\Delta_N$ is reflexive with unit edges, and that its $21$
two-dimensional faces comprise $20$ standard triangles and the square
with $\ell(F^\circ)=1$, are finite computations; the Hodge numbers are
given by Batyrev's formulas \cite[\S4]{Batyrev94}, evaluated in exact
arithmetic.  The ten vertices of $\Delta_N^\circ$, in the normalization
of \S\ref{sec:prelim} (so that
$\Delta_N = \{v : \langle u, v\rangle \ge -1$ for every vertex $u$ of
$\Delta_N^\circ\}$, with equality exactly on the facet dual to $u$),
are
\[
\begin{array}{lllll}
(0,0,-1,-1), & (0,0,-1,0), & (1,0,-1,-1), & (5,-2,-1,-1), & (0,1,0,-1),\\[2pt]
(0,-2,-1,-1), & (0,-2,-1,4), & (-2,0,1,-1), & (-2,0,1,4), & (-2,5,6,-1),
\end{array}
\]
with the dual edge $F^\circ$ of the square face spanned by the first
two.\footnote{The ancillary programs list the same data in the opposite
normalization $\langle\cdot,\cdot\rangle=+1$ of \cite{paper1}, that is,
negated.}  By
Proposition~\ref{prop:planting}, $X_N$ has exactly one singular point, an ordinary
double point.

It remains to prove the uniform assertion.  Let $X\subset\PP_\Delta$ be
any hypersurface satisfying its hypotheses.  Proposition~\ref{prop:planting}
and the toric description of an ordinary double point identify its unique
singularity with the cone over a unit square whose dual edge has lattice
length one.  The square has no interior lattice points, so an MPCP
subdivision of its cone inserts only a diagonal wall: the induced crepant
resolution $\pi:\widehat X\to X$ is \emph{small}, with exceptional locus a
single rational curve $\gamma$, and it is projective because the MPCP
subdivision is projective \cite{Batyrev94}.  If $A$ is ample on
$\widehat X$ then $A\cdot\gamma>0$, so
$[\gamma] \neq 0$ in $H_2(\widehat X; \mathbb{Q})$.  By Friedman's theorem
\cite{Friedman86,RollenskeThomas}, a nodal
Calabi--Yau threefold with a projective small resolution is smoothable
only if the exceptional classes
satisfy a relation $\sum a_i [\gamma_i] = 0$ in
$H_2(\widehat X;\mathbb{Q})$ with all $a_i \neq 0$; with a
single node this forces $[\gamma] = 0$.  Hence $X$ admits no smoothing,
and the explicit $X_N$ is a special case.
\end{proof}

\begin{remark}\label{rem:nonpoly}
No \emph{polynomial} deformation (moving $X_N$ inside $\PP_{\Delta_N}$)
can smooth the node, since every $\Delta$-regular member has the same
singularity structure by Proposition~\ref{prop:planting}; so
Theorem~\ref{thm:B2} says the non-polynomial directions of \cite{Mav04}
fail as well.
Contrast the generic one-nodal quintic threefold, where the node smooths
polynomially: there the small resolutions are non-K\"ahler and
$[\gamma]$ vanishes rationally.  $\Delta_N$ manufactures the opposite
situation inside the Kreuzer--Skarke list.  Non-smoothable nodal
Calabi--Yau threefolds as such are not new: the local-to-global
failure for nodes is the classical content of \cite{Friedman86}, and
obstructed nodal configurations appear in Namikawa's study of the local
moduli of Calabi--Yau threefolds \cite{Namikawa02}.  What $\Delta_N$
adds is a one-node realization produced, and shown non-smoothable,
by the combinatorics of a single reflexive polytope.
\end{remark}

\begin{remark}[relation to the Batyrev--Kreuzer census]\label{rem:BKdecides}
The nearest relative of Theorem~\ref{thm:B2} is the criterion of
\cite{BK10} itself, and we state the relationship precisely.  Every
two-dimensional face of $\Delta_N$ is a standard triangle or a unit
parallelogram, with at least one parallelogram, so $\Delta_N$ belongs to
the class of $198{,}849$ polytopes surveyed in \S\ref{sec:bk}.  The
Batyrev--Kreuzer criterion recalled there is a necessary \emph{and}
sufficient condition, and it constrains exactly the faces with
$\ell(F^\circ)=1$: the diagonal relation of such a face must lie in the
span of the diagonal relations of the \emph{other} square faces.  The
unique square face of $\Delta_N$ has $\ell(F^\circ)=1$ and there are no
other square faces, so the span is zero while the diagonal relation is
not; their criterion therefore also returns \emph{non-smoothable}, and
$\Delta_N$ is one of the $198{,}849 - 30{,}241 = 168{,}608$ polytopes
their criterion does not show to be smoothable.  Theorem~\ref{thm:B2} is proved
directly from \cite{Friedman86}, without the census, and what it
contributes is not the mechanism but the \emph{instance}: an explicit
minimal one --- one node, seven vertices, Hodge numbers computed ---
exhibited for the single purpose of showing that the threshold
$\ell(F^\circ)\ge2$ of Theorem~\ref{thm:A} cannot be lowered.  We have not
found such an example in the literature: \cite{BK10} tabulate the polytopes
their criterion shows to be smoothable and do not discuss the complement.
\end{remark}

\section{Single del Pezzo cone points of dual-edge length one}
\label{sec:germ}

We recall the consequence of \cite[Thm.~A]{paper5} needed here.
Let $Z$ be a connected normal projective complex threefold with
$\omega_Z\cong\mathcal O_Z$ and $H^1(Z,\mathcal O_Z)=0$, whose only
singularity is analytically the anticanonical cone over
$S\in\{\PP^1\times\PP^1,\mathrm{dP}_7,\mathrm{dP}_6\}$.
Let $\widehat Z\to Z$ be the crepant resolution locally given by
contracting the zero section of $\operatorname{Tot}(K_S)$, and let
$L$ be the link of the singularity.  The circle-bundle projection gives
\[
 H_2(L;\mathbb Q)\cong K_S^\perp
 :=\{\alpha\in H_2(S;\mathbb Q):K_S\cdot\alpha=0\}.
\]
If $Z$ is smoothable, then
\begin{equation}\label{eq:lonekernel}
 \ker\bigl\{K_S^\perp\longrightarrow
                 H_2(\widehat Z;\mathbb Q)\bigr\}\ne0,
\end{equation}
where the map is induced by $S\hookrightarrow\widehat Z$.
Indeed, the cited theorem supplies a nonzero relation in a subspace of
$H_2(L;\mathbb Q)$ corresponding to the chosen local smoothing
component.  Each such subspace is contained in $K_S^\perp$, including
both components of the degree-six cone.  Thus \eqref{eq:lonekernel}
is necessary regardless of the component used.  It is only this
necessary consequence, not a sufficient criterion on the full link
homology, that we use below.

\begin{proposition}[restatement of
Proposition~\textup{\ref{prop:C}}]\label{prop:C2}
Let $\Delta$ be reflexive with unit edges whose unique non-smooth
two-dimensional face $F$ has exactly one interior lattice point, is of
type \textup{(S)}, and has $\ell(F^\circ) = 1$.  Then the singular locus
of $X$ is one point, the anticanonical cone over $S \in
\{\PP^1{\times}\PP^1,\ \mathrm{dP}_7,\ \mathrm{dP}_6\}$, and an MPCP
resolution $\widehat X \to X$ can be chosen whose exceptional locus is
the smooth irreducible surface $S$, contracted to the point.  The
contraction has relative Picard number
$\rho(\widehat X/X) = \rho(S) \in \{2,3,4\}$, so it factors in the projective category and
is \emph{never} a primitive contraction, and $X$ is not
$\mathbb{Q}$-factorial at the point.  The map
$H_2(S;\mathbb Q)\to H_2(\widehat X;\mathbb Q)$ is injective;
the necessary condition of \cite[Thm.~A]{paper5}, recalled in
\S\ref{sec:germ}, therefore implies that $X$ is not smoothable.
\end{proposition}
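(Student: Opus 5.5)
We first identify the germ and its resolution combinatorially, then read off the Picard and $\mathbb Q$-factoriality claims, and finally prove the injectivity of $H_2(S;\mathbb Q)\to H_2(\widehat X;\mathbb Q)$, which is what feeds the necessary condition \eqref{eq:lonekernel}.

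\emph{Step 1: the germ.} A unit-edge lattice polygon with exactly one interior point is one of the sixteen reflexive polygons with unit edges; these are the polygons whose cones are anticanonical cones over toric del Pezzo surfaces. Among them we keep those of type (S) and not reduced-rigid. The standard triangle-free list is: $\PP^2$ (the triangle of area $9/2$, type (R)); $\FF_1$ (the quadrilateral of type (R)); the square $[-1,1]^2$-type quadrilateral, $=s+s'+s+s'$ for $\PP^1\times\PP^1$; the pentagon, $=T+s+s'$ for $\mathrm{dP}_7$; and the hexagon, $=T+T'$ or $s+s'+s''$ for $\mathrm{dP}_6$. Theorem~\ref{thm:trich} leaves exactly $S\in\{\PP^1\times\PP^1,\mathrm{dP}_7,\mathrm{dP}_6\}$. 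Proposition~\ref{prop:planting} with $\ell(F^\circ)=1$ then gives one singular point, isomorphic to $C(F)$, the anticanonical cone over $S$.

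\emph{Step 2: the resolution.} Because $F$ is the only non-smooth two-dimensional face, the MPCP subdivision restricts over $\sigma_F$ to a projective refinement. We choose it to contain the ray through the interior point $c$ of $F$ and the star subdivision from $c$, which is unimodular since all edges are unit and $F$ is reflexive with a unique interior point. The exceptional divisor of $\PP_{\widehat\Delta}\to\PP_\Delta$ over $V(\sigma_F)$ is the toric threefold $D_c$, a $\PP^1$-bundle-like variety fibred over $V(\sigma_F)$ with fibre $S$. Its intersection with $\widehat X$ is, over the single point $p$, a copy of $S$: we check this via the product chart $U_{\sigma_F}\cong C(F)\times\CC^\ast$ together with the graph description of Lemma~\ref{lem:transverse}, which pulls back to $\operatorname{Tot}(K_S)\times\CC^\ast$. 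The exceptional locus is therefore $S$, smooth and irreducible. The relative cone of curves of $\widehat X/X$ is that of $S$, so $\rho(\widehat X/X)=\rho(S)\in\{2,3,4\}$. Since the relative Mori cone has several extremal rays, the contraction factors through any one of them, hence is not primitive. A Weil divisor on $X$ given by the image of the toric divisor of a vertex of $F$ is not $\mathbb Q$-Cartier at $p$: its strict transform has non-proportional degrees on the extremal curves of $S$, whereas the pullback of a $\mathbb Q$-Cartier divisor would be trivial on all of them. This gives the failure of $\mathbb Q$-factoriality.

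\emph{Step 3: injectivity, the main obstacle.} We will show $H_2(S;\mathbb Q)\to H_2(\widehat X;\mathbb Q)$ is injective by pairing with divisors. By Poincaré duality it suffices to find divisor classes on $\widehat X$ whose restrictions to $S$ span $H^2(S;\mathbb Q)=\operatorname{Pic}(S)_\mathbb Q$. The toric boundary divisors $D_V|_{\widehat X}$ for the vertices $V$ of $F$ restrict to the torus-invariant boundary curves of $S$, namely the curves corresponding to the rays of the fan of $S$, i.e.\ the vertices of $F$. These span $\operatorname{Pic}(S)$, since $S$ is toric. The point to verify is that the restriction $D_V|_{\widehat X}\cap S$ really is the boundary curve $C_V$ of $S$, and not twisted by the $\CC^\ast$ direction. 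We would check this in the local chart $\operatorname{Tot}(K_S)\times\CC^\ast$: there $D_V$ is $\pi^{-1}(C_V)\times\CC^\ast$, and $\widehat X$ is a section transverse to the $\CC^\ast$ factor, so the intersection is $C_V$ on the zero section. Once the restriction map $\operatorname{Pic}(\widehat X)\to\operatorname{Pic}(S)$ is surjective rationally, its dual gives injectivity on $H_2$. In particular the restriction to $K_S^\perp$ has zero kernel, contradicting \eqref{eq:lonekernel}, so $X$ is not smoothable.
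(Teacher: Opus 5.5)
Your argument is correct in substance and follows the paper's skeleton:
\begin{itemize}
\item the five unit-edge polygons with one interior point, together with Theorem~\ref{thm:trich}, leave three type-(S) cases;
\item a projective MPCP restricting to the star subdivision at the interior point $c$ makes the exceptional locus a single copy of $S$;
\item the vertex divisors of $F$ restrict to the boundary curves of $S$, which span $\operatorname{Pic}(S)_{\mathbb Q}=H^2(S;\mathbb Q)$. This gives $\rho(\widehat X/X)=\rho(S)$, and via the Kronecker pairing (Poincar\'e duality is not needed) injectivity of $H_2(S;\mathbb Q)\to H_2(\widehat X;\mathbb Q)$, so \eqref{eq:lonekernel} fails.
\end{itemize}

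The one genuinely different step is non-$\mathbb Q$-factoriality. The paper passes to the punctured neighbourhood $U=\operatorname{Tot}(\omega_S)\smallsetminus S$. It uses the Gysin isomorphism $H^2(U;\mathbb Q)\cong H^2(S;\mathbb Q)/\mathbb Q\,c_1(K_S)$ to show that a globally realized boundary divisor restricts to a non-torsion analytic line bundle on $U$. You argue instead with intersection numbers on $\widehat X$. This is more elementary and, like the paper, uses a \emph{global} Weil divisor, which is the point the paper stresses. But you should write in the exceptional term explicitly. If $mD$ is Cartier at $p$, then $\pi^*(mD)=m\widetilde D+aS$ has degree $0$ on every curve in $S$. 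Since $S|_S=K_S$, this makes $\widetilde D|_S=C_{v'}$ numerically proportional to $K_S$. That is impossible: any nonzero class proportional to $K_S$ has positive square, while $C_{v'}^2\in\{0,-1\}$. As you phrased it, comparing the strict transform with ``trivial'' degrees leaves this comparison with $K_S$ implicit.

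Some repairs are needed, none fatal.
\begin{itemize}
\item \textbf{Step~1 uses the wrong polygons.} You describe the polar (moment) polygons, not the unit-edge fan polygons. The area-$9/2$ triangle and $[-1,1]^2$ have edges of length $3$ and $2$, and $T+s+s'$ cannot be a unit-edge pentagon. The correct list is $\conv\{e_1,e_2,-e_1-e_2\}$, the $\FF_1$ quadrilateral, the diamond $\conv\{\pm e_1,\pm e_2\}=s+s'$, the pentagon $T+s$, and the hexagon. Only five of the sixteen reflexive polygons have unit edges. The type conclusions survive, but as written Step~1 contradicts your unimodularity claim in Step~2.
\item \textbf{Existence of the chosen MPCP.} You need a reason that a \emph{projective} MPCP restricting to the star at $c$ exists; the paper pulls at $c$ first, which yields a regular triangulation. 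This is not automatic: a fine triangulation of the hexagon can contain a triangle on three consecutive vertices, whose interior edge adds an extra exceptional curve.
\item \textbf{Order of steps.} In Step~2, the equality $\rho(\widehat X/X)=\rho(S)$ uses the injectivity $N_1(S)\to N_1(\widehat X)$, which you only prove in Step~3.
\item \textbf{Contractibility of extremal rays.} Since $K_{\widehat X}\equiv_X 0$, ``factors through any extremal ray'' needs a contraction theorem that actually applies. The paper uses the klt pair $(\widehat X,\varepsilon S)$, for which $(K_{\widehat X}+\varepsilon S)\cdot\gamma=\varepsilon K_S\cdot\gamma<0$. For ``never primitive'' over all choices of resolution, it adds that projective crepant resolutions are connected by flops, which preserve the relative Picard number.
\end{itemize}
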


\begin{proof}
A unit-edge polygon with exactly one interior lattice point is, up to
equivalence, one of five, and the corresponding cones are the
anticanonical cones over the five smooth toric del Pezzo
surfaces (\cite[Rem.~2.6]{paper1}, which rests on
\cite[\S5.5]{Gross97}); two of them, $\PP^2$ and $\FF_1$, give reduced-rigid cones, and the remaining
three are of type~(S): the cones over $\PP^1\times\PP^1$ $(\deg 8)$,
$\mathrm{dP}_7$ $(\deg 7)$ and $\mathrm{dP}_6$ $(\deg 6)$.  So $F$ is
one of these three.  An MPCP subdivision is a maximal \emph{projective}
crepant subdivision and is not unique; we choose one whose restriction
to $\Cone(F)$ is the star subdivision at the interior lattice point $q$
of $F$: a pulling triangulation of the boundary that pulls first at $q$
is regular \cite[\S4.3]{DLRS}, hence projective \cite[Ch.~15]{CLS}, and
pulling at $q$ first forces every maximal cell over $F$ to contain $q$.  The star
subdivision of $\Cone(F)$ at $q$ is unimodular: after translating $q$ to
the origin, each triangle spanned by an edge of $F$ and the origin has
determinant $\pm1$, a finite check for each of the three
polygons.\footnote{\texttt{paper2\_check.py}.}  Hence this MPCP resolution is smooth over
the point, with irreducible exceptional divisor the smooth toric del
Pezzo surface $S$ (the toric surface of the star of $q$) contracted to
the point, and it is an isomorphism elsewhere;
all other faces being smooth and all edges unit, $\widehat X$ is smooth
everywhere.

Now the negative statements.  Let $k$ be the number of vertices of $F$,
so $\rho(S) = k-2 \in \{2,3,4\}$.  For each vertex $v'$ of $F$ the
toric divisor $V(v') \cap \widehat X$ of $\widehat X$ restricts to
$S$ as the toric boundary curve of $S$ corresponding to the edge
$(q,v')$ of the star subdivision; these boundary divisors generate
$\operatorname{Pic}(S)_{\mathbb{R}}$, so the intersection pairing of
divisors of $\widehat X$ with curves contained in $S$ has rank
$\rho(S)$.  Conversely every contracted curve lies in $S$, so its
relative numerical class is in the image of $N_1(S)$ and the relative
Picard number is at most $\rho(S)$.  Thus the classes of the contracted
curves span a face of dimension $\rho(S) \ge 2$ in $N_1(\widehat X)$:
the relative Picard number of $\widehat X \to X$ is $\rho(S) \ge 2$.  Since
$K_{\widehat X} \equiv 0$, the cone theorem says nothing about these
rays directly; apply it instead to the klt pair $(\widehat X,
\varepsilon S)$ for small $\varepsilon > 0$: adjunction gives
$S|_S = K_S$, and $-K_S$ is ample on the del Pezzo surface $S$, so
$(K_{\widehat X} + \varepsilon S)\cdot\gamma = \varepsilon\,
K_S\cdot\gamma < 0$ for every contracted curve $\gamma$ (all of which
lie in $S$).  The relative cone and contraction theorems for the pair
then make $\overline{NE}(\widehat X/X)$ rational polyhedral with
contractible extremal rays, and $\rho(S) \ge 2$ yields a factorization
$\widehat X \to Z \to X$ with both steps non-trivial, so the
contraction is not primitive; and since every
projective crepant resolution of $X$ is connected to this one by flops
\cite{Kawamata08}, which preserve the relative Picard number, no choice
is primitive.
(Concretely, for $S = \mathrm{dP}_7$ or $\mathrm{dP}_6$: a
$(-1)$-curve $\gamma \subset S$ has
$N_{\gamma/\widehat X} = N_{\gamma/S}\oplus\omega_S|_\gamma =
\mathcal{O}(-1)\oplus\mathcal{O}(-1)$ and can be contracted separately.)
Finally, the local class group of the germ $C(F)$ is the cokernel of
$M_F \to \ZZ^{k}$ (one coordinate per ray of $\sigma_F$), of rank
$k - 3 \ge 1$, and these local classes are realized by algebraic
divisors on $X$: under the chart identification of
Proposition~\ref{prop:planting}, the closures of the intersections of $X$ with the toric
divisors of $\PP_\Delta$ through the point restrict to the toric
divisors of $C(F)$, which generate its class group.

We now verify directly that one of these globally realized classes remains
non-torsion analytically.  Let
\[
 U=\operatorname{Tot}(\omega_S)\smallsetminus S.
\]
Contraction of the zero section identifies $U$ with the punctured analytic
germ of $C(F)$.  The Gysin sequence of the associated circle bundle gives
\[
 H^2(U,\mathbb{Q})\cong
 H^2(S,\mathbb{Q})/\mathbb{Q} c_1(K_S),
\]
because $H^1(S,\mathbb{Q})=0$.  The exponential sequence on $S$, together
with $H^1(S,\mathcal O_S)=H^2(S,\mathcal O_S)=0$, identifies
$\operatorname{Pic}(S)_{\mathbb{Q}}$ with $H^2(S,\mathbb{Q})$.  Since $S$ is
a smooth toric del Pezzo surface with $\rho(S)\ge2$, its toric boundary
classes generate this vector space, and at least one of the boundary classes
realized by a divisor on $X$ has first Chern class not in
$\mathbb{Q}c_1(K_S)$.  Its pullback to $U$ consequently has nonzero rational
first Chern class and is not torsion as an analytic line bundle.  If the
corresponding divisor on $X$ were $\mathbb Q$-Cartier at the point, a positive
multiple would be locally principal and its restriction to $U$ would be
trivial, a contradiction.  Thus $X$ is not
$\mathbb{Q}$-factorial.  (The germ computation alone would not suffice:
a generic one-nodal quintic threefold is $\mathbb{Q}$-factorial although
its node is analytically non-factorial.)  Thus
the hypotheses of both \cite[Thm.~5.8]{Gross97} (primitivity) and
\cite[Thm.~4.3]{Gross97} ($\mathbb{Q}$-factoriality of $X$) fail.

The same restriction calculation also determines smoothability.
The boundary divisor classes above span $H^2(S;\mathbb Q)$, so
$H_2(S;\mathbb Q)\to H_2(\widehat X;\mathbb Q)$ is injective.
In particular its restriction to $K_S^\perp$ has zero kernel.
There is only one singular point, so the necessary condition
\eqref{eq:lonekernel} fails and $X$ is not smoothable.
\end{proof}

\begin{remark}\label{rem:germalign}
Gross's theorem \cite[Thm.~5.8]{Gross97} smooths exactly these germs
when they are realized by \emph{primitive} type~II contractions; such
realizations exist on non-toric Calabi--Yau threefolds, where the
restriction $\operatorname{Pic}(\widehat X) \to \operatorname{Pic}(S)$
can have rank one, and are constructed and smoothed for all three germs
by Kapustka--Kapustka and Kapustka
\cite{KapustkaKapustka09,Kapustka09}.  The theorem fails exactly when the
exceptional surface is $\PP^2$ or $\FF_1$, which are precisely the two
classes with one interior point that
are \emph{reduced-rigid} in the trichotomy (Theorem~\ref{thm:trich}): the germ
classification and the global smoothing problem see the same dichotomy.
Proposition~\ref{prop:C2} shows that these Batyrev hypersurfaces do
not provide the primitive realization.  Their nonsmoothability follows
from the injectivity of $H_2(S;\mathbb Q)\to
H_2(\widehat X;\mathbb Q)$ and \eqref{eq:lonekernel}, even though
each possible germ is locally smoothable.
\end{remark}

\section{Conifold configurations and the Batyrev--Kreuzer census}
\label{sec:bk}

Suppose every non-smooth two-dimensional face of $\Delta$ is a unit
parallelogram, so that $X$ is a nodal Calabi--Yau threefold; this is the
setting of Batyrev--Kreuzer \cite{BK10}.  They identify
$H_2(\widehat X;\mathbb{Q})$ with the space of rational linear relations
among the vertices of $\Delta$, and show that the $\ell(F^\circ)$
exceptional curves over one square face $F$ all represent a single class
$[\gamma_F]$ there: up to the sign fixed by the choice of small resolution,
the diagonal relation of $F$, the sum of the endpoints of one
diagonal minus the sum of the endpoints of the other \cite{BK10}.
(The comparison carries a change of polarity: \cite{BK10} place the
parallelogram faces on the polar polytope, so their $1$-dimensional face
$\theta$ is our dual edge $F^\circ$, their $2$-face $\theta^\circ$ is our
$F$, and their relation $\rho_\theta$ is our diagonal relation of $F$.
Our $\ell(F^\circ)$ is their $k_\theta = l(\theta) - 1$, the number of
nodes lying over the face.)
Friedman's relation restricted to one face,
$\sum_{i=1}^{\ell} a_i [\gamma_F] = 0$ with all $a_i \neq 0$, is therefore
satisfiable if and only if $\ell = \ell(F^\circ) \ge 2$; this reproduces
the threshold of Theorem~\ref{thm:A} from homology alone.  A polytope in
this class is thus smoothable whenever all its square faces have
$\ell(F^\circ) \ge 2$ (Theorem~\ref{thm:Amulti}(iii): sum the per-face
relations and apply the nodal smoothing criterion to the projective
small MPCP resolution),
while a face with $\ell(F^\circ) = 1$ requires a relation involving
other faces: the class $[\gamma_F]$ must be a rational combination of the classes over
other faces.

We tabulated this over the per-vertex-count database copy described in
\cite[\S6.1]{paper1}, together with the separately reconstructed
$36$-vertex polytope.  For each reflexive $4$-polytope we test every two-dimensional face for being
a standard triangle or a unit parallelogram, and compute $\ell(F^\circ)$
as the lattice length of the edge of $\Delta^\circ$ dual to it.  The
ancillary input manifest pins these files to the revision and SHA-256
digests specified in \cite[\S6.1]{paper1}.  The full database-copy
verification reported there checks reflexivity and absence of repetitions
up to lattice equivalence for every entry; comparison with the published
classification total establishes completeness.  This verifies the input
copy, separately from the singularity enumeration reported here.
The verification report and input-provenance record accompany the
ancillary programs.  Our enumeration
finds exactly $198{,}849$ reflexive $4$-polytopes in the
Batyrev--Kreuzer class: every two-dimensional face a standard triangle
or a unit
parallelogram, and at least one parallelogram, so that $X$ is nodal with
at least one node (polytopes all of whose faces are standard triangles
give smooth $X$ and are excluded).  The total is
in perfect agreement with the count of Batyrev--Kreuzer \cite{BK10}, an
independent check on both computations.  Of these $198{,}849$, only
$3{,}774$ ($1.90\%$) have \emph{all} their square faces of dual length
$\ell(F^\circ)\ge 2$, and are thus smoothable by the per-face argument
alone; the remaining $195{,}075$ each carry at least one
$\ell(F^\circ)=1$ square.  The all-$\ell\ge2$ fraction falls sharply with
the vertex count: $39$ of the $43$ such polytopes with at most $7$
vertices are all-$\ell\ge2$, against $100$ of the $148{,}959$ with $14$
or more vertices.  Batyrev--Kreuzer show that $30{,}241$ of the $198{,}849$
are smoothable, via Namikawa's criterion.  Their criterion constrains only
the faces with $\ell(F^\circ) = 1$: in the form stated in \cite{BK10},
$X$ is smoothable if and only if for every square face with
$\ell(F^\circ) = 1$ the diagonal relation of that face lies in the span
of the diagonal relations of the other square faces; faces with
$\ell(F^\circ) \ge 2$ impose no condition, since their repeated class
cancels against itself.  A polytope whose squares all have
$\ell(F^\circ) \ge 2$ therefore passes the criterion vacuously, and the
$3{,}774$ all-$\ell\ge2$ polytopes all lie among the $30{,}241$: exactly
$3{,}774$ of Batyrev--Kreuzer's $30{,}241$ smoothings, one in eight, are
explained by the per-face mechanism, and each of the remaining
$26{,}467$ involves an $\ell(F^\circ)=1$ square whose exceptional class
occurs in a homology relation involving the classes over other
faces.  Understanding the
combinatorial structure of these cross-face relations (the analogue,
for the Calabi--Yau hypersurface, of the admissible Minkowski data of
\cite{CHP24}) is the central open problem left by this paper.

\section{Open questions}\label{sec:open}

\begin{question}\label{q:ell1}
Is a Batyrev Calabi--Yau threefold whose unique singular point has
germ $C(F)$, with $F$ of type \textup{(S)}, $i(F)\ge2$, and
$\ell(F^\circ)=1$, ever smoothable?
The case $i(F)=0$ is excluded by Theorem~\ref{thm:B2}; the case
$i(F)=1$ is excluded by Proposition~\ref{prop:C2} and the criterion
of \cite{paper5}.  For $i(F)\ge2$ the crepant resolution has a
reducible exceptional divisor, and these exact germs are not covered
by that criterion.
\end{question}

\begin{question}\label{q:multi}
Is Theorem~\ref{thm:Amulti}(ii) unconditional?  Can the
face families of several type-\textup{(S)} faces be realized
simultaneously (a codimension-two analogue of the admissible Minkowski
data of \cite{CHP24})?  Separately, is the semiuniversal base of a Batyrev
hypersurface with only type-\textup{(S)} faces always irreducible?  The
construction does not provide the compatible toric data needed for
iteration (Remark~\ref{rem:noiterate}).
\end{question}

\begin{question}
Can the nodal relation criterion be extended to all locally smoothable
cone singularities arising on unit-edge Batyrev hypersurfaces, using the
homology of the exceptional loci and the discriminants of the local
smoothing components?  The criterion of \cite{paper5} gives such an
extension for nodes and exact anticanonical cones over smooth del Pezzo
surfaces of degrees five, six and seven, and over $\PP^1\times\PP^1$.
The remaining toric germs include the type-\textup{(S)} polygons with
at least two interior lattice points.
\end{question}

\bigskip
\noindent{\sc Bernd Johannes Wuebben, New York, NY,} \texttt{wuebben@gmail.com}


\begin{thebibliography}{CHP24}
\bibitem[Alt95]{Altmann95} K.~Altmann, \emph{Minkowski sums and homogeneous
deformations of toric varieties}, T\^ohoku Math.\ J.\ (2) \textbf{47}
(1995), 151--184.
\bibitem[Alt97]{Altmann97} K.~Altmann, \emph{The versal deformation of an
isolated toric Gorenstein singularity}, Invent.\ Math.\ \textbf{128}
(1997), 443--479.
\bibitem[Bat94]{Batyrev94} V.~V. Batyrev, \emph{Dual polyhedra and mirror
symmetry for Calabi--Yau hypersurfaces in toric varieties}, J.\ Algebraic
Geom.\ \textbf{3} (1994), 493--535.
\bibitem[BK10]{BK10} V.~Batyrev and M.~Kreuzer, \emph{Constructing new
Calabi--Yau 3-folds and their mirrors via conifold transitions},
Adv.\ Theor.\ Math.\ Phys.\ \textbf{14} (2010), 879--898.
\bibitem[CFP22]{CFP} A.~Corti, M.~Filip, and A.~Petracci, \emph{Mirror symmetry
and smoothing Gorenstein toric affine 3-folds}, Facets of Algebraic
Geometry vol.~I, LMS Lecture Note Ser.\ \textbf{472}, CUP, 2022,
132--163.
\bibitem[CHP24]{CHP24} A.~Corti, P.~Hacking, and A.~Petracci, \emph{Smoothing
Gorenstein toric Fano 3-folds}, arXiv:2412.06500.
\bibitem[CLS11]{CLS} D.~A. Cox, J.~B. Little, and H.~K. Schenck, \emph{Toric
Varieties}, Graduate Studies in Mathematics \textbf{124}, Amer.\ Math.\
Soc., 2011.
\bibitem[DLRS10]{DLRS} J.~A. De~Loera, J.~Rambau, and F.~Santos,
\emph{Triangulations: Structures for Algorithms and Applications},
Algorithms and Computation in Mathematics \textbf{25}, Springer, 2010.
\bibitem[Fri86]{Friedman86} R.~Friedman, \emph{Simultaneous resolution of
threefold double points}, Math.\ Ann.\ \textbf{274} (1986), 671--689.
\bibitem[Gra74]{Grauert74} H.~Grauert, \emph{Der Satz von Kuranishi f\"ur
kompakte komplexe R\"aume}, Invent.\ Math.\ \textbf{25} (1974), 107--142.
\bibitem[Gro97]{Gross97} M.~Gross, \emph{Deforming Calabi--Yau threefolds},
Math.\ Ann.\ \textbf{308} (1997), 187--220.
\bibitem[Har77]{Hartshorne} R.~Hartshorne, \emph{Algebraic Geometry}, GTM
\textbf{52}, Springer, 1977.
\bibitem[Kap09]{Kapustka09} G.~Kapustka, \emph{Primitive contractions of
Calabi--Yau threefolds II}, J.\ Lond.\ Math.\ Soc.\ (2) \textbf{79}
(2009), 259--271.
\bibitem[Kaw08]{Kawamata08} Y.~Kawamata, \emph{Flops connect minimal
models}, Publ.\ Res.\ Inst.\ Math.\ Sci.\ \textbf{44} (2008), 419--423.
\bibitem[KK09]{KapustkaKapustka09} G.~Kapustka and M.~Kapustka,
\emph{Primitive contractions of Calabi--Yau threefolds I},
Comm.\ Algebra \textbf{37} (2009), 482--502.
\bibitem[Mat89]{Matsumura} H.~Matsumura, \emph{Commutative Ring Theory},
Cambridge Studies in Advanced Mathematics \textbf{8}, Cambridge
University Press, 1989.
\bibitem[Mav04]{Mav04} A.~R. Mavlyutov, \emph{Deformations of Calabi--Yau
hypersurfaces arising from deformations of toric varieties},
Invent.\ Math.\ \textbf{157} (2004), 621--633.
\bibitem[Nam02]{Namikawa02} Y.~Namikawa, \emph{Stratified local moduli of
Calabi--Yau threefolds}, Topology \textbf{41} (2002), 1219--1237.
\bibitem[Pet20]{Petracci20} A.~Petracci,
\emph{Some examples of non-smoothable Gorenstein Fano toric threefolds},
Math.\ Z.\ \textbf{295} (2020), 751--760; arXiv:1804.07960.
\bibitem[Pet21]{Petracci18} A.~Petracci, \emph{Homogeneous deformations of
toric pairs}, Manuscripta Math.\ \textbf{166} (2021), 37--72.
\bibitem[Rim80]{Rim} D.~S. Rim, \emph{Equivariant $G$-structure on versal
deformations}, Trans.\ Amer.\ Math.\ Soc.\ \textbf{257} (1980), 217--226.
\bibitem[RT09]{RollenskeThomas} S.~Rollenske and R.~Thomas, \emph{Smoothing
nodal Calabi--Yau $n$-folds}, J.\ Topol.\ \textbf{2} (2009), 405--421.
\bibitem[Wue26a]{paper1} B.~J. Wuebben, \emph{Non-smoothable Calabi--Yau
threefolds from reflexive polytopes}, preprint, 2026; available at
\url{https://github.com/bwuebben/calabi-yau-smoothability}.
\bibitem[Wue26e]{paper5} B.~J. Wuebben, \emph{Smoothing Calabi--Yau threefolds with nodes and del Pezzo cone points},
preprint, 2026, revised 5 September 2026.
\end{thebibliography}
\end{document}